\DeclarePairedDelimiter{\ceil}{\lceil}{\rceil}
\def\nh{{\small \sc \bf Nh2D}}
\def\sky{{\small \sc \bf Sky2D}}
\def\skyt{{\small \sc \bf Sky3D}}
\def\ani{{\small \sc \bf Ani3D}}
\def\poiss{{\small \sc \bf Poisson2D}}
\def\nho{{\small \sc  Nh2D}}
\def\skyo{{\small \sc  Sky2D}}
\def\skyto{{\small \sc  Sky3D}}
\def\anio{{\small \sc  Ani3D}}
\def\poisso{{\small \sc  Poisson2D}}
\newcommand{\sq}{\hbox{\rlap{$\sqcap$}$\sqcup$}}
\newcommand{\qed}{\hspace*{\fill}\sq}
\newlength{\algrhswidth}
\title{Numerical Stability of s-step Enlarged Krylov Subspace Conjugate Gradient methods}
\author{Sophie Moufawad \thanks{American University of Beirut (AUB), Beirut, Lebanon.  (sm101@aub.edu.lb) } }
\begin{document}

\maketitle \thispagestyle{empty}

\begin{abstract}
Recently, enlarged Krylov subspace methods, that consists of enlarging the Krylov subspace by a maximum of $t$ vectors per iteration based on the domain decomposition of the graph of $A$, were introduced in the aim of reducing communication when solving systems of linear equations $Ax=b$. In this paper, the s-step enlarged Krylov subspace Conjugate Gradient methods are introduced, whereby $s$ iterations of the enlarged Conjugate Gradient methods are merged in one iteration. The numerical stability of these s-step methods is studied, and several numerically stable versions are proposed.
  Similarly to the enlarged Krylov subspace methods, the s-step  enlarged Krylov subspace methods have a faster convergence than Krylov methods, in terms of iterations. Moreover, by computing $st$ basis vectors of the enlarged Krylov subspace $\mathscr{K}_{k,t}(A,r_0)$ at the beginning of each s-step iteration, communication is further reduced. It is shown in this paper that the introduced methods are parallelizable with less communication, with respect to their corresponding enlarged versions and to Conjugate Gradient.   
\end{abstract}

\begin{keywords} Linear Algebra, Iterative Methods, Krylov subspace methods, High Performance Computing, Minimizing Communication
 \end{keywords}


\section{Introduction}
\addtolength{\belowdisplayskip}{-3mm}
\addtolength{\abovedisplayskip}{-3mm}

Recently, the enlarged Krylov subspace methods \cite{EKS} were introduced in the aim of obtaining  methods that converge faster than classical Krylov methods, and are parallelizable with less communication, whereby communication is the data movement between different levels of memory hierarchy (sequential) and different processors (parallel). 
Different methods and techniques have been previously introduced for reducing communication in Krylov subspace methods such as Conjugate Gradient (CG) \cite{cgor}, Generalized Minimal Residual (GMRES) \cite{ssch}, bi-Conjugate Gradient \cite{bicg1,bicg2}, and bi-Conjugate Gradient Stabilized \cite{bicgstab}. The interest in such methods is due to the communication bottleneck on modern-day computers and the fact that the Krylov subspace methods are governed by BLAS 1 and BLAS 2 operations that are communication-bound. 

These methods and techniques can be categorized depending on how the communication reduction is achieved. There are three main categories where the reduction is achieved at the mathematical/theoretical level, algorithmic level, and implementation level. 
The first category is introducing  methods based on different Krylov subspaces 
such as the augmented Krylov methods \cite{augmKSM, augmKSM2}, and the Block Krylov methods \cite{bcg} that are based on the augmented and block Krylov subspaces respectively. The recently introduced enlarged Krylov subspace methods fall into this category since the methods search for the approximate solution in the enlarged Krylov subspace. The second category is to restructure the algorithms such as the s-step methods that compute $s$ basis vectors per iteration \cite{sstepcg1, chronop,walker, erhel, Carson} and the communication avoiding methods that further reduce the communication \cite{cagmres, hoemmen, grigori}. The third category is to hide the cost of communication by overlapping it with other computation, like pipelined CG \cite{hidecg,hidecg2} and  pipelined GMRES \cite{hide}.

In this paper, we introduce the s-step enlarged Krylov subspace methods, whereby $s$ iterations of the enlarged Krylov subspace methods are merged in one iteration.  The idea of s-step methods is not new, as mentioned previously. However, the aim of this work is the introduction of  methods that reduce communication with respect to the classical Krylov methods, at the three aforementioned levels (mathematical/theoretical, algorithmic, and implementation level).
 Similarly to the enlarged Krylov subspace methods , the s-step  enlarged Krylov subspace methods have a faster convergence than Krylov methods, in terms of iterations. In addition, computing $st$ basis vectors of the enlarged Krylov subspace $\mathscr{K}_{k,t}(A,r_0)$ at the beginning of each s-step enlarged Krylov subspace iteration reduces the number of sent messages in a distributed memory architecture.

We introduce several s-step enlarged Conjugate Gradient versions, based on the short recurrence enlarged CG methods (SRE-CG and SRE-CG2) and MSDO-CG presented in \cite{EKS}. After briefly introducing CG, a Krylov projection method for symmetric (Hermitian) positive definite (SPD) matrices, and the enlarged CG methods in section \ref{sec:overview}, we discuss the new s-step enlarged CG versions (section \ref{sec:sstep}) in terms of numerical stability (section \ref{sec:SRNum}), preconditioning (section \ref{sec:precCG}), and communication reduction in parallel (section \ref{sec:par}). Although we only consider in this article a distributed memory system, however the introduced methods reduce communication even in shared memory systems. Finally we conclude in section \ref{sec:conc}.

\section{From Conjugate Gradient (CG) to Enlarged Conjugate Gradient Methods}\label{sec:overview}
The Conjugate Gradient method of Heistens and Stiefel  \cite{cgor} was introduced in 1952. Since then, different  CG versions were introduced for different purposes. In 1980, Dian O'Leary introduced the block CG method \cite{bcg}  for solving an SPD system with multiple right-hand sides. Block CG performs less work than solving each system apart using CG. In addition, it may converge faster in terms of iterations and time in some cases discussed in \cite{bcg}. In 1989, Chronopoulos and Gear introduced the s-step CG method, that performs $s$ CG iterations simultaneously with the goal of reducing communication by performing more flops using the data in fast memory. Several CG versions where introduced for solving successive linear systems with different right-hand sides, by recycling the computed Krylov subspace, such as \cite{erhelASCG}. Moreover, several preconditioned and parallelizable CG versions were introduced, such as deflated CA-CG \cite{Carson2}, MSD-CG \cite{msdcg}, augmented CG \cite{augmKSM, augmKSM2}. Recently, enlarged Conjugate Gradient methods such as SRE-CG, SRE-CG2, and MSDO-CG were introduced \cite{EKS}. In this section, we briefly discuss CG, s-step CG, and enlarged CG versions. For a brief overview of other related CG versions such as the block CG, coop-CG, and MSD-CG, refer to \cite{sophiethesis}.  

The CG method is a Krylov projection method that finds a sequence of approximate solutions $x_k \in x_0 + \mathcal{K}_k(A,r_0)$ ($k>0$) of the system $Ax = b$, by imposing the Petrov-Galerkin condition, $\; r_k \perp \mathcal{K}_k$,  where $\mathcal{K}_k (A, r_0) = span\{ r_0, Ar_0, A^2 r_0,..., A^{k-1} r_0 \}$ is the Krylov subspace of dimension $k$, $x_0$ is the initial iterate, and $r_0$ is the initial residual. At the $k^{th}$ iteration, CG computes  the new approximate solution  $x_k = x_{k-1} +\alpha_k p_k$ that minimizes $\phi (x) = \frac12 (x)^t A x - b^t x$ over the corresponding space $x_0 + \mathcal{K}_k(A, r_0)$, where $k>0$, $p_k= r_{k-1}+\beta_k p_{k-1} \in \mathcal{K}_k(A, r_0)$ is the $k^{th}$ search direction,  $p_1 = r_0$, and $\alpha_{k} = \frac{(p_{k})^t r_{k-1}}{(p_{k})^t A p_{k}} = \frac{||r_{k-1}||^2_2} {||p_{k}||_A^2}$ is the step along the search direction. As for $\beta_k  = - \frac{(r_{k-1})^t A p_{k-1}} {(p_{k-1})^t A p_{k-1}} = \frac{||r_{k-1}||^2_2}{||r_{k-2}||^2_2}$, it is defined so that the search directions are A-orthogonal ($p_k^tAp_i = 0$ for all $i\neq k$), since otherwise the  Petrov-Galerkin condition is not guaranteed. 

The s-step CG method \cite{chronop} introduced by Chronopoulos and Gear in 1989 is also a Krylov projection method that solves the system $Ax = b$ by imposing the Petrov-Galerkin condition. However, it finds a sequence of approximate solutions $x_{k} \in x_0 + \mathcal{K}_{sk}(A,r_0)$, where  $k>0$, $s>0$, and $\mathcal{K}_{sk}(A,r_0) = span\{ r_0, Ar_0, A^2 r_0,..., A^{sk-2} r_0, A^{sk-1} r_0 \}$.  At the $k^{th}$ iteration,  $x_{k} = x_{k-1} - P_k\alpha_k$ is obtained by minimizing $\phi (x)$, where $P_k$ is a matrix containing the $s$ search directions and $\alpha_k = ((P_{k})^t A P_{k})^{-1} P_k^tr_{k-1} $ is a vector containing the $s$ corresponding step lengths. Initially, $P_1 = R_0 = [r_0 \; Ar_0 \;... \; A^{s-1}r_0]$ is defined as the first $s$ basis vectors of the Krylov subspace. Then $P_k = R_{k-1}+P_{k-1}\beta_k$ for $k>1$, where $R_{k-1} = [r_{k-1} \;Ar_{k-1}\; ... \;A^{s-1}r_{k-1}]$, and $\beta_k = - (P_{k-1}^tAP_{k-1})^{-1}(R_{k-1}^tP_{k-1})$ is an $s \times s$ matrix. 

On the other hand, the enlarged CG methods are enlarged Krylov projection methods that find a sequence of approximate solutions $x_k \in x_0 + \mathscr{K}_{k,t}(A,r_0)$ ($k>0$) of the system $Ax = b$, by imposing the Petrov-Galerkin condition, $\; r_k \perp \mathscr{K}_{k,t}(A,r_0)$, where $\mathscr{K}_{k,t} (A, r_0) = span\{ T(r_0), AT(r_0), A^2 T(r_0),..., A^{k-1} T(r_0) \}$ is the enlarged Krylov subspace of dimension at most $tk$, $x_0$ is the initial iterate, $r_0$ is the initial residual, and $T(r_0)$ is an operator that splits $r_0$ into $t$ vectors based on a domain decomposition of the matrix $A$. Several enlarged CG versions were introduced in \cite{EKS}, such as MSDO-CG, LRE-CG, SRE-CG, SRE-CG2, and the truncated SRE-CG2.
Moreover, in \cite{ECG} a block variant of SRE-CG is proposed, whereby the number of search directions per iteration is reduced using deflation. 

\section{s-step Enlarged CG versions}\label{sec:sstep}
 The aim of s-step enlarged CG methods is to merge $s$ iterations of the enlarged CG methods, and perform more flops per communication, in order to reduce communication.
 
 In the case of the SRE-CG and SRE-CG2 versions, reformulating into s-step versions is straight forward since these methods build an A-orthonormal basis $\{T(r_0), AT(r_0),...,A^kT(r_0)\}$ and update the approximate solutions $x_k$. The basis construction is independent from the consecutive approximate solutions. But the challenge is in constructing a numerically stable A-orthonormal basis of $st$ vectors, where $t$ is number of domains and $s$ is the number of merged iterations.
 
 As for MSDO-CG, at each iteration $k$, $t$ search directions are built and A-orthonormalized and used to update the approximate solution. Moreover, the construction of the search directions depends on the previously computed approximate solution. So merging $s$ iterations of the MSDO-CG algorithm requires more work, since it is not possible to separate the search directions construction from the solution's update. Hence, a new version will be proposed where a modified enlarged Krylov subspace is built.
 
 \subsection{s-step SRE-CG and SRE-CG2} \label{sec:sstepS}
 The short recurrence enlarged CG (SRE-CG) and SRE-CG2 methods, are iterative enlarged Krylov subspace projection methods that build at the  $k^{th}$ iteration, an A-orthonormal ``basis" $Q_k$ ($Q_k^tAQ_k = I$) for the enlarged Krylov subspace \vspace{-5mm}$$\mathscr{K}_{k,t} = span\{T(r_0), AT(r_0),...,A^{k-1}T(r_0)\},$$ and approximate the solution, $x_k = x_{k-1}+Q_k\alpha_k$, by imposing the orthogonality condition on $r_k = r_{k-1} - AQ_k\alpha_k$, ($r_k \perp \mathscr{K}_{k,t}$), and minimizing $$\phi(x) = \frac{1}{2}x^tAx - x^tb,$$ where $Q_k$ is an $n\times kt$ matrix and $T(r_0)$ is the set of $t$ vectors obtained by projecting $r_0$ on the $t$ distinct domains of $A$. 
 
There are 2 phases in these methods, building the ``basis" and updating the approximate solution. The difference between SRE-CG and SRE-CG2 is in the ``basis" construction. After A-orthonormalizing $W_1 = \mathscr{T}_0$, where $\mathscr{T}_0$ is the matrix containg the $t$ vectors of $T(r_0)$, it is shown in \cite{EKS} that at each iteration $k\geq 3$, $W_k = AW_{k-1}$ has to be A-orthonormalized only against $W_{k-1}$ and $W_{k-2}$ and then against itself. Finally, the approximate solution $x_k$ and the residual $r_k$ are updated, $x_k = x_{k-1}+W_k\alpha_k$ and $r_k = r_{k-1} - AW_k\alpha_k$, where $\alpha_k = W_k^tr_{k-1}$. This is the SRE-CG method.

However, in finite arithmetic there might be a loss of A-orthogonality at the $k^{th}$ iteration between the vectors of $Q_k = [W_1, W_2, ..., W_k]$. Hence, in SRE-CG2 $W_k = AW_{k-1}$ is A-orthonormalized against all $W_i$'s for $i=1,2,..,k-1$. 

The construction of $W_k$ matrix is independent from updating the approximate solution $x_k$. Thus it is possible to restructure the SRE-CG and SRE-CG2 algorithms by first computing $W_1$, $W_2,..., W_s$, and then updating $x_1$, $x_2,...,x_s$ as shown in Algorithm \ref{alg:restSRE-CG} and Algorithm \ref{alg:restSRE-CG2}.

The advantage of such reformulations (Algorithm \ref{alg:restSRE-CG} and Algorithm \ref{alg:restSRE-CG2}) is that the matrix $A$ is fetched once from memory per construction of $st$ $A$-orthonormal vectors; as opposed to fetching it $s$ times in the SRE-CG and SRE-CG2 algorithms. However, the number of messages and words sent in parallel is unchanged since the 2 corresponding algorithms perform the same operations but in a different order.

 \begin{algorithm}[h!]
\centering
\caption{ Restructured SRE-CG2  }
{\renewcommand{\arraystretch}{1.3}
\begin{algorithmic}[1]
\Statex{\textbf{Input:} $A$,  $n \times n$ symmetric positive definite matrix; $k_{max}$, maximum allowed iterations}
\Statex{\qquad \quad $b$,  $n \times 1$ right-hand side; $x_0$, initial guess; $\epsilon$, stopping tolerance; $s$, s-step}
\Statex{\textbf{Output:} $x_k$, approximate solution of the system $Ax=b$}
\State$r_0 = b - Ax_0$, $\rho_0 = ||r_0||_2$ , ${\rho} = \rho_0$, $k = 1$; 
\While {( ${\rho} > \epsilon {\rho_0}$ and $k < k_{max}$ )}
\If {($k==1$)}
\State A-orthonormalize $W_k = \mathscr{T}_0$,  and let $Q = W_k$ 
\Else 
\State A-orthonormalize $W_k = AW_{k-1}$ against $Q$
\State A-orthonormalize $W_k$ and let $Q = [Q \; W_k]$ 
\EndIf
\For {($i=1:s-1$)} 
\State A-orthonormalize $W_{k+i} = AW_{k+i-1}$ against $Q$
\State A-orthonormalize $W_{k+1}$ and let $Q = [Q \; W_{k+1}]$ 
\EndFor
\For {($i=k:k+s-1$)} 
\State $\tilde{\alpha} = (W_i^t r_{i-1})$, \;\; $x_i = x_{i-1} + W_i\tilde{\alpha} $  
\State $r_i = r_{i-1} - AW_i\tilde{\alpha} $ 
\EndFor
\State $k = k+s$, \;\; $\rho = ||r_{k-1}||_2$ 
\EndWhile
\end{algorithmic}}
\label{alg:restSRE-CG2}
\end{algorithm}

\begin{algorithm}[h!]
\centering
\caption{ Restructured SRE-CG }
{\renewcommand{\arraystretch}{1.3}
\begin{algorithmic}[1]
\Statex{\textbf{Input:} $A$,  $n \times n$ symmetric positive definite matrix; $k_{max}$, maximum allowed iterations}
\Statex{\qquad \quad $b$,  $n \times 1$ right-hand side; $x_0$, initial guess; $\epsilon$, stopping tolerance; $s$, s-step}
\Statex{\textbf{Output:} $x_k$, approximate solution of the system $Ax=b$}
\State$r_0 = b - Ax_0$, $\rho_0 = ||r_0||_2$ , ${\rho} = \rho_0$, $k = 1$; 
\While {( ${\rho} > \epsilon {\rho_0}$ and $k < k_{max}$ )}
\If {($k==1$)}
\State A-orthonormalize $W_k = \mathscr{T}_0$
\Else 
\State  A-orthonormalize $W_{k} = AW_{k-1}$ against $W_{k-2}$ and $W_{k-1}$
\State A-orthonormalize $W_{k}$ 
\EndIf
\For {($i=1:s-1$)} 
\State A-orthonormalize $W_{k+i} = AW_{k+i-1}$ against $W_{k+i-2}$ and $W_{k+i-1}$
\State A-orthonormalize $W_{k+i}$ 
\EndFor
\For {($i=k:k+s-1$)} 
\State $\tilde{\alpha} = (W_i^t r_{i-1})$, \;\; $x_i = x_{i-1} + W_i\tilde{\alpha} $  
\State $r_i = r_{i-1} - AW_i\tilde{\alpha} $ 
\EndFor
\State $k = k+s$, \;\; $\rho = ||r_{k-1}||_2$ 
\EndWhile
\end{algorithmic}}
\label{alg:restSRE-CG}
\end{algorithm}
To reduce communication the inner for loops have to be replaced with a set of denser operations. Lines 3 till 12 of Algorithm \ref{alg:restSRE-CG2} can be viewed as a block Arnoldi A-orthonormalization procedure, whereas lines 4 till 13 of Algorithm \ref{alg:restSRE-CG} can be viewed as a truncated block Arnoldi A-orthonormalization procedure. As for the second loop, by updating $x_k$ and $r_k$ once, we obtain an s-step version. 

At the $k^{th}$ iteration of an s-step enlarged CG method, $st$ new basis vectors of $ \mathscr{K}_{ks,t}= span\{T(r_0), AT(r_0),...,A^{sk-1}T(r_0)\}$, are computed and stored in $V_{k}$, an $n \times st$ matrix. Since, $$ \mathscr{K}_{ks,t}=  \mathscr{K}_{(k-1)s,t} + span\{A^{s(k-1)}T(r_0), A^{s(k-1)+1}T(r_0)...,A^{sk-1}T(r_0)\},$$ then $Q_{ks} = [Q_{(k-1)s}, V_{k}]$, where $Q_{(k-1)s}$ is an $n \times (k-1)st$ matrix that contains the $(k-1)st$ vectors of $\mathscr{K}_{(k-1)s,t}$, and $Q_{ks}$ is $n \times kst$ matrix.

Then, $x_k = x_{k-1} + Q_{ks}\alpha_k \in \mathscr{K}_{ks,t}$, where  $\alpha_k = (Q_{ks}^tAQ_{ks})^{-1}(Q_{ks}^tr_{k-1})$ is defined by minimizing $\phi(x) = \frac{1}{2}x^tAx - b^tx$ over $x_0 + \mathscr{K}_{ks,t}$. As a consequence, $r_k = b-Ax_k = r_{k-1}-AQ_{ks}\alpha_k \in \mathscr{K}_{(k+1)s,t}$ satisfies the Petrov-Galerkin condition $r_k \perp \mathscr{K}_{ks,t}$, i.e. $r_k^ty = 0$ for all $y \in \mathscr{K}_{ks,t}$.  

In the s-step SRE-CG2 version, $Q_{ks}$ is A-orthonormalized  ($Q_{ks}^tAQ_{ks} = I$), then \vspace{+1mm}
$$\alpha_k = (Q_{ks}^tAQ_{ks})^{-1}(Q_{ks}^tr_{k-1}) = Q_{ks}^tr_{k-1}.$$\vspace{+2mm}
 But $r_{k-1} \perp \mathscr{K}_{(k-1)s,t}$, i.e. $r_{k-1}^ty = 0$ for all $y \in \mathscr{K}_{(k-1)s,t}$. Thus, 
 \begin{eqnarray}
\alpha_k  &=& Q_{ks}^tr_{k-1} = [Q_{(k-1)s}, V_{k}]^tr_{k-1} = [0_{(k-1)st \times n}; V_{k}^tr_{k-1}]\\
x_k &=& x_{k-1} + Q_{ks}\alpha_k = x_{k-1} +[Q_{(k-1)s}, V_{k}][0_{(k-1)st \times n}; V_{k}^tr_{k-1}]\\
 &=& x_{k-1} + V_{k}V_{k}^tr_{k-1} = x_{k-1} + V_{k}\tilde{\alpha}_k,
\end{eqnarray}   where $\tilde{\alpha}_k=V_{k}^tr_{k-1}$. Then, $r_k = r_{k-1} - AV_{k}\tilde{\alpha}_k$.

  \begin{algorithm}[h!]
\centering
\caption{ s-step SRE-CG2  }
{\renewcommand{\arraystretch}{1.3}
\begin{algorithmic}[1]
\Statex{\textbf{Input:} $A$,  $n \times n$ symmetric positive definite matrix; $k_{max}$, maximum allowed iterations}
\Statex{\qquad \quad $b$,  $n \times 1$ right-hand side; $x_0$, initial guess; $\epsilon$, stopping tolerance; $s$, s-step }
\Statex{\textbf{Output:} $x_k$, approximate solution of the system $Ax=b$}
\State$r_0 = b - Ax_0$, $\rho_0 = ||r_0||_2$ , $\rho = \rho_0$, $k = 1$; 
\While {( ${\rho} > \epsilon {\rho_0}$ and $k < k_{max}$ )}
\State Let $j = (k-1)s+1$
\If {($k==1$)}
\State A-orthonormalize $W_j = \mathscr{T}_0$, and  let $Q = W_j$ 
\Else 
\State A-orthonormalize $W_j = AW_{j-1}$ against $Q$
\State A-orthonormalize $W_j$,  and let $Q = [Q, \; W_j]$ 
\EndIf
\State Let $V = W_j$  
\For {($i=1:s-1$)} 
\State A-orthonormalize $W_{j+i} = AW_{j+i-1}$ against $Q$
\State A-orthonormalize $W_{j+i}$,  let  $V = [V, \; W_{j+i}]$ and $Q = [Q, \; W_{j+i}]$ 
\EndFor
\State $\tilde{\alpha} = V^t r_{k-1}$ 
\State $x_k = x_{k-1} + V\tilde{\alpha} $ 
\State $r_k = r_{k-1} - AV\tilde{\alpha} $  
\State $\rho = ||r_{k}||_2$, $k = k+1$  
\EndWhile
\end{algorithmic}}
\label{alg:sstepSRE-CG2}
\end{algorithm}

\begin{algorithm}[h!]
\centering
\caption{ s-step SRE-CG }
{\renewcommand{\arraystretch}{1.3}
\begin{algorithmic}[1]
\Statex{\textbf{Input:} $A$,  $n \times n$ symmetric positive definite matrix; $k_{max}$, maximum allowed iterations}
\Statex{\qquad \quad $b$,  $n \times 1$ right-hand side; $x_0$, initial guess; $\epsilon$, stopping tolerance; $s$, s-step }
\Statex{\textbf{Output:} $x_k$, approximate solution of the system $Ax=b$}
\State$r_0 = b - Ax_0$, $\rho_0 = ||r_0||_2$ , $\rho = \rho_0$, $k = 1$; 
\While {( ${\rho} > \epsilon {\rho_0} $ and $k < k_{max}$ )}
\State Let $j = (k-1)s+1$
\If {($k==1$)}
\State A-orthonormalize $W_j = \mathscr{T}_0$,  and let $V = W_j$ 
\Else 
\State A-orthonormalize $W_{j} = AW_{j-1}$ against $W_{j-2}$ and $W_{j-1}$
\State A-orthonormalize $W_{j}$ and let $V = W_{j}$  
\EndIf
\For {($i=1:s-1$)} 
\State A-orthonormalize $W_{j+i} = AW_{j+i-1}$ against $W_{j+i-2}$ and $W_{j+i-1}$
\State A-orthonormalize $W_{j+i}$ and let $V = [V \; W_{j+i}]$ 
\EndFor
\State $\tilde{\alpha} = (V^t r_{k-1})$ 
\State $x_k = x_{k-1} + V\tilde{\alpha} $ 
\State $r_k = r_{k-1} - AV\tilde{\alpha} $  
\State $\rho = ||r_{k}||_2$, $k = k+1$  
\EndWhile
\end{algorithmic}}
\label{alg:sstepSRE-CG1}
\end{algorithm}

In Algorithm \ref{alg:sstepSRE-CG2}, the $st$ new vectors are computed similarly to Algorithm \eqref{alg:restSRE-CG2}, where $t$ vectors are computed at a time ($W_j$), A-orthonormalized against all the previously computed vectors using CGS2 A-orthonormalization method \cite{sophiethesis}, and finally A-orthonormalized using A-CholQR \cite{A-ortho} or Pre-CholQR \cite{A-ortho2, sophiethesis}. At the $k^{th}$ s-step iteration, all the $kst$ vectors have to be stored in $Q_{ks}$.  

Note that in exact arithmetic, at the $k^{th}$ s-step iteration, the A-orthonormalization of $W_j$ for $j\geq (k-1)s+1$, against $\tilde{Q} = [W_1, W_2, W_3, ... W_{j-1} ]$ can be summarized as follows, 
where $Q_{(k-1)s} = [W_1, W_2, W_3, ... W_{(k-1)s}]$,

\begin{eqnarray*}
W_j &=& AW_{j-1} - \tilde{Q}\tilde{Q}^tA(AW_{j-1}) \\
&=& AW_{j-1} - \tilde{Q}[W_1^tA(AW_{j-1});\, W_2^tA(AW_{j-1});\, ... \,;\, W_{j-1}^tA(AW_{j-1})] \\
&=& AW_{j-1} - \tilde{Q}[0;\, 0;\, ... \,;\,0;\, W_{j-2}^tA(AW_{j-2});\,W_{j-1}^tA(AW_{j-1})] \\
&=& AW_{j-1} - W_{j-1}W_{j-1}^tA(AW_{j-1}) - W_{j-2}W_{j-2}^tA(AW_{j-1}),\\ \vspace{-2mm}
\end{eqnarray*} 

\noindent since $(AW_{i})^tAW_{j-1} = 0$ for all $i<j-2$ by the A-orthonormalization process. 
This version (Algorithm \ref{alg:sstepSRE-CG1}) is called the s-step short recurrence enlarged conjugate gradient (s-step SRE-CG), where only  the last $zt$ computed vectors ($z = max(s,3)$) are stored, and  every $t$ vectors $W_j$ are A-orthonormalize against the previous $2t$ vectors $W_{j-2}$ and $W_{j-1}$ for $j>2$. As for $x_{k}$, and $r_{k}$, they are defined as in the s-step SRE-CG2 method.

For $s=1$, Algorithms \ref{alg:sstepSRE-CG2} and \ref{alg:sstepSRE-CG1} are reduced to the SRE-CG2 and SRE-CG methods, where the total number of messages sent in parallel is $6klog(t)$, assuming that the number of processors is set to $t$ and that the methods converge in $k$ iterations. Note that, more words are sent in the SRE-CG2 Algorithm \ref{alg:sstepSRE-CG2}, than in SRE-CG Algorithm \ref{alg:sstepSRE-CG1}, due to the A-orthonormalization procedure \cite{EKS}.

 For $s>1$, Algorithms \ref{alg:sstepSRE-CG2} and \ref{alg:sstepSRE-CG1} send $(s-1)log(t)$ less messages and words per s-step iteration, than Algorithm \ref{alg:restSRE-CG2} and \ref{alg:restSRE-CG}, assuming we have $t$ processors with distributed memory. This communication reduction is due to the computation of one $\alpha$ which consists of an $n\times st$ matrix vector multiplication, rather than $s$ computations of $n\times t$ matrix vector multiplications. Thus the total number of messages sent in parallel in Algorithms \ref{alg:sstepSRE-CG2} and \ref{alg:sstepSRE-CG1} is $5s{k}_{s}log(t) + {k}_{s}log(t)$, where $k_s$ is the number of $s-step$ iterations needed till convergence. Similarly to the case of $s=1$, the s-step SRE-CG2 Algorithm \ref{alg:sstepSRE-CG2} sends more words than the s-step SRE-CG Algorithm \ref{alg:sstepSRE-CG1}. 

Algorithms \ref{alg:sstepSRE-CG2} and \ref{alg:sstepSRE-CG1} will converge in $k_s$ iterations, where $k_s \geq \ceil{\frac{k}{s}}$, and $k$ is number of iterations needed for convergence for $s=1$. In exact arithmetic, every s-step iteration of Algorithms \ref{alg:sstepSRE-CG2} and \ref{alg:sstepSRE-CG1} is equivalent to $s$ iteration of the  SRE-CG2 and SRE-CG Algorithms, respectively. However, they might not be equivalent in finite arithmetic due to the loss of A-orthogonality of the $Q_{ks}$ matrix.  
At the first iteration of Algorithms \ref{alg:sstepSRE-CG2} and \ref{alg:sstepSRE-CG1}, 
\begin{eqnarray}
x_1 &=& x_{0} + V_1\tilde{\alpha}_1 = x_{0} + V_1(V_1^tr_0) \nonumber \\
&=& x_{0} + [W_1 W_2 ... \; Ws][W_1 W_2 ... \; Ws]^t r_0 \nonumber \\
&=& x_{0} + \sum_{i=1}^{s} W_iW_i^t r_0 \label{x1}
\end{eqnarray}
For $s=3$, \begin{eqnarray} x_1 &=& x_{0} + W_1W_1^t r_0 + W_2W_2^t r_0 + W_3W_3^t r_0.\nonumber \end{eqnarray}
On the other hand, after 3 iterations of the SRE-CG2 and SRE-CG Algorithms, the solution $x_3$ is:
\begin{eqnarray}
x_1 &=& x_{0} + W_1(W_1^tr_0) \nonumber \\
r_1 &=& r_0 - AW_1W_1^tr_0 \nonumber \\
x_2 &=& x_1 + W_2(W_2^tr_1) = x_{0} + W_1(W_1^tr_0) + W_2W_2^t(r_{0} - AW_1W_1^tr_0) \nonumber\\
&=& x_{0} + W_1W_1^tr_0 + W_2W_2^tr_{0} - W_2(W_2^tAW_1)W_1^tr_0 \nonumber\\
r_2 &=& r_0 - AW_1W_1^tr_0 - AW_2W_2^tr_{0} + AW_2(W_2^tAW_1)W_1^tr_0\nonumber \\
x_3 &=& x_2 + W_3W_3^tr_2 = x_{0} + W_1W_1^tr_0 + W_2W_2^tr_{0} - W_2(W_2^tAW_1)W_1^tr_0  \nonumber \\
&& + W_3W_3^t( r_0 - AW_1W_1^tr_0 - AW_2W_2^tr_{0} + AW_2W_2^tAW_1W_1^tr_0 ) \nonumber \\
&=& x_{0} + W_1W_1^tr_0 + W_2W_2^tr_{0} +  W_3W_3^tr_0  - W_2(W_2^tAW_1)W_1^tr_0 \nonumber \\
&& - W_3(W_3^tAW_1)W_1^tr_0  - W_3(W_3^tAW_2)W_2^tr_{0} + W_3(W_3^tAW_2)(W_2^tAW_1)W_1^tr_0 \nonumber 
\end{eqnarray}

For $s>3$, more terms with $W_j^tAW_i$ will be added. Assuming that $W_j^tAW_i = 0$ for all $j<i$, then the obtained $x_s$ in the SRE-CG2 and SRE-CG Algorithms, is equivalent to $x_1$ \eqref{x1} in the s-step  SRE-CG2 and s-step SRE-CG Algorithms. Similarly, under the same assumptions, $x_{is}$ in the SRE-CG2 and SRE-CG Algorithms, is equivalent to $x_i$ in the s-step  SRE-CG2 and s-step SRE-CG Algorithms. In case for some $j<i$, $W_j^tAW_i \neq 0$, then all the subsequent s-step solutions will not be equal to the corresponding SRE-CG2 and SRE-CG solutions.

 Assuming that the s-step versions converge in $k_s = \ceil{\frac{k}{s}}$ iterations, 
  then,   $5{k}log(t) + \frac{k}{s}log(t)$ messages are sent in parallel.
Hence, by merging $s$ iterations of the enlarged CG methods for some given value $t$, communication is reduced by a total of at most $(s-1)log(t)k_s = \frac{s-1}{s} log(t) k$ less messages and words. 

Theoretically, it is possible to further reduce communication by replacing the block Arnoldi A-orthonormalization (Algorithm \ref{alg:sstepSRE-CG2} lines 3-12) and the truncated block Arnoldi A-orthonormalization (Algorithm \ref{alg:sstepSRE-CG1} lines 4-13) with a communication avoiding kernel that first computes the $st$ vectors and then A-orthonormalize them against previous vectors and against themselves, as summarized in Algorithm \ref{alg:CA-Arnoldi}. These methods are called communication avoiding SRE-CG2 (CA SRE-CG2) and communication avoiding SRE-CG (CA SRE-CG2) respectively. For the first iteration ($k=1$), $W_{j-1} = [T(r_0)] = \mathscr{T}_0$ in Algorithm \ref{alg:CA-Arnoldi}. 
 \begin{algorithm}[h!]
\centering
\caption{CA-Arnoldi A-orthonormalization}
{\renewcommand{\arraystretch}{1.3}
\begin{algorithmic}[1]
\Statex{\textbf{Input:} $W_{j-1}$,  $n \times t$ matrix}; $k$, iteration
\Statex{\qquad \quad $Q$,  $n \times m$ matrix, $m = (s+1)t$ in CA SRE-CG and $m = kst$ in CA SRE-CG2}
\Statex{\textbf{Output:} $V$, the $n \times st$ matrix containing the A-orthonormalized $st$ computed vectors}
\State \textbf{if}{ ($k==1$)} \textbf{then} $W_j = W_{j-1}$, and $V = W_j$
\State \textbf{else} $W_j = AW_{j-1}$, and $V = W_j$ 
\State \textbf{end if}
\For {($i=1:s-1$)} 
\State Let $W_{j+i} = AW_{j+i-1}$ 
\State Let $V = [V \; W_{j+i}]$ 
\EndFor
\State \textbf{if}{ ($k>1$)}  \textbf{then} A-orthonormalize $V$ against $Q$ \textbf{end if}
\State A-orthonormalize $V$
\end{algorithmic}}
\label{alg:CA-Arnoldi}
\end{algorithm}

In s-step SRE-CG, the $st$  vectors are computed and A-orthonormalized against the previous $2t$ vectors, $t$ vectors at a time.
But in the case of CA SRE-CG, the $st$ vectors are all computed before being A-orthonormalized.
Thus, it is not sufficient to just A-orthonormalize the $st$ computed vectors against the last $2t$ vectors. Instead, the $st$ computed vectors should be A-orthonormalized  against the last $(s+1)t$ vectors.

Assuming that $Q = Q_{(k-1)s} = [W_1, W_2, W_3, ... W_{(k-1)s}]$ is A-orthonormal, then for all   $i+l<j$ where $j=(k-1)s$, we have that $$(A^iW_{l})^tAW_{j}=W_{l}^tA(A^iW_{j}) = 0.$$ After computing $W_{j+i} = A^i W_{(k-1)s} = A^iW_j$ for $i = 1, .., s$, the A-orthonormalization is summarized as follows:
\begin{eqnarray*}
W_{j+i} &=& A^iW_{j} - {Q}{Q}^tA(A^iW_{j}) \\
&=& A^iW_{j} - {Q}[W_1^tA(A^iW_{j});\, W_2^tA(A^iW_{j});\, ... \,;\, W_{j}^tA(A^iW_{j})] \\
&=& A^iW_{j} - \sum_{l=1}^{j}W_l W_{l}^tA(A^iW_{j}) = A^iW_{j} - \sum_{l=j-i}^{j}W_l W_{l}^tA(A^iW_{j}).
\end{eqnarray*} 
This implies that $W_{j+1}$ should be A-orthonormalized against the last $2t$ vectors $W_{j-1},$ and $W_{j}$. Whereas, $W_{j+s}$ should be A-orthonormalized against the last $(s+1)t$ vectors 

\noindent $W_{j-s},W_{j-s+1},..., W_{j}$. And in general, $W_{j+i}$ should be A-orthonormalized against the last $(i+1)t$ vectors. To reduce communication, in CA-SRE-CG  all of the $st$ computed vectors, $W_{j+1}, W_{j+2}, ..., W_{j+s}$, are A-orthonormalized against the previous $(s+1)t$ vectors.
 
 Given that we are computing the monomial basis, the $st$ computed vectors might  be linearly dependent, which leads to a numerically unstable basis. The numerical stability and convergence of such communication avoiding and s-step versions is discussed in section \ref{sec:SRNum}.

 \subsection{s-step MSDO-CG}   
 
 The MSDO-CG method \cite{EKS} computes $t$ search directions at each iteration $k$, $P_k =  \mathscr{T}_{k-1} + P_{k-1}diag(\beta_k)$ where $P_0 = \mathscr{T}_{0}$ and $\mathscr{T}_{i}$ is the matrix containing the $t$ vectors of $T(r_i)$. Then, $P_k$ is A-orthonormalized against all $P_i$'s ($i<k$), and used to update $x_k = x_{k-1} + P_k\alpha_k$ and $r_k = r_{k-1} - AP_k\alpha_k$, where $\alpha_k  =  P_k^tr_{k-1}$. This procedure is interdependent since we can not update $P_k$ without $r_{k-1}$, and we can not update $r_{k-1}$ without $P_{k-1}$. Thus, to build an s-step version we need to split the computation of $P_k$ and the update of $x_k$, which is not possible. For that purpose we introduce a modified version of MSDO-CG where we build a modified Enlarged Krylov basis rather than computing search directions. 
 
  As discussed in \cite{EKS}, the vectors of $P_k$ belong to the Enlarged Krylov subspace $$ \mathscr{K}_{k,t} = span \{ T(r_0), AT(r_0), .., A^{k-1}T(r_0)\}.$$ Moreover, the vectors of $P_k$ belong to the modified Enlarged Krylov subspace \begin{equation}
    \overline{\mathscr{K}}_{k,t} = span\{ T(r_0), T(r_1), T(r_2), ..., T(r_{k-1}) \}. \label{modeks}\end{equation}
 
 In general, we define the modified Enlarged Krylov subspace for a given $s$ value as follows 
 \begin{eqnarray}
   \overline{\mathscr{K}}_{k,t,s}  = span & \{& T(r_0), AT(r_0), ..., A^{s-1}T(r_0), \nonumber \\ 
   & &T(r_1), AT(r_1), ..., A^{s-1}T(r_1), \nonumber \\
   & &T(r_2), AT(r_2), ..., A^{s-1}T(r_2), \nonumber \\
   & & \vdots  \nonumber \\
   & & T(r_{k-1}), AT(r_{k-1}), ..., A^{s-1}T(r_{k-1}) \nonumber  \}
\end{eqnarray} 
Note that for $s=1$, the  modified Enlarged Krylov subspace becomes $\overline{\mathscr{K}}_{k,t}$ defined in \eqref{modeks}. 
Moreover, for $t=s=1$, the  modified Enlarged Krylov subspace becomes $$  \overline{\mathscr{K}}_{k,1,1} = span\{ r_0, r_1, r_2, ..., r_{k-1} \}, $$ where the Krylov subspace $\mathcal{K}_k = span\{ r_0, Ar_0, A^2r_0, ..., A^{k-1}r_{0} \} = span\{ r_0, r_1, r_2, ..., r_{k-1}\}$.

Similarly to the Enlarged Krylov subspace $\mathscr{K}_{ks,t} $, the modified Enlarged Krylov subspace $\overline{\mathscr{K}}_{k,t,s}$ is of dimension at most $kst$. 
\begin{theorem}\label{subk22}
The Krylov subspace $\mathcal{K}_k$ is a subset of the modified enlarged Krylov subspace $ \overline{\mathscr{K}}_{k,t,1}$ ($\mathcal{K}_k \subset \overline{\mathscr{K}}_{k,t,1}$).
\end{theorem}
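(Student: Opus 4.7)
The plan is to prove by induction on $i$ that $A^i r_0 \in \overline{\mathscr{K}}_{i+1,t,1}$ for every $0 \le i \le k-1$. Combined with the monotonicity $\overline{\mathscr{K}}_{i+1,t,1} \subseteq \overline{\mathscr{K}}_{k,t,1}$, this gives $\mathcal{K}_k = span\{r_0, Ar_0, \ldots, A^{k-1}r_0\} \subset \overline{\mathscr{K}}_{k,t,1}$. The base case $i=0$ is immediate: since $T$ is the domain-decomposition operator satisfying $\sum_{j=1}^{t} T_j(v) = v$ for every vector $v$, we have $r_0 = \sum_j T_j(r_0) \in span(T(r_0)) = \overline{\mathscr{K}}_{1,t,1}$.

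For the inductive step, I would assume $A^j r_0 \in \overline{\mathscr{K}}_{j+1,t,1}$ for all $j<i$ and then telescope the MSDO-CG residual recurrence $r_l = r_{l-1} - A P_l \alpha_l$ to get $r_0 - r_i = A \sum_{l=1}^{i} P_l \alpha_l$. Because both $r_0$ and $r_i = \sum_j T_j(r_i)$ lie in $\overline{\mathscr{K}}_{i+1,t,1}$, so does the right-hand side, meaning $A\sum_l P_l \alpha_l \in \overline{\mathscr{K}}_{i+1,t,1}$. The key structural fact, which I would establish by recursively unrolling $P_l = \mathscr{T}_{l-1} + P_{l-1}\,diag(\beta_l)$, is that the $j$-th column satisfies $(P_l)_j \in span\{T_j(r_0), T_j(r_1), \ldots, T_j(r_{l-1})\}$, so every $P_l \alpha_l$ is a linear combination of the generators of $\overline{\mathscr{K}}_{l,t,1}$, and the entire vector $\sum_l P_l\alpha_l$ sits inside $\overline{\mathscr{K}}_{i,t,1}$.

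The main obstacle is that the telescoped recurrence yields $A$ applied to a single specific element of $\mathscr{K}_{i,t}$, whereas we need $A^i r_0 = \sum_j A\,(A^{i-1} T_j(r_0))$, a priori a different element of $A\mathscr{K}_{i,t}$. The cleanest resolution is to strengthen the inductive statement to a per-domain claim: $A^l T_j(r_0) \in \overline{\mathscr{K}}_{l+1,t,1}$ for every $0 \le l \le k-1$ and every $j$, from which the theorem follows by linearity via $A^i r_0 = \sum_j A^i T_j(r_0)$. Establishing this requires a joint induction over $l$ and $j$ that tracks how each domain's component propagates through the MSDO-CG iterates, exploiting the splitting of $A\,\mathscr{T}_{l-1}$ into per-domain pieces and matching them against the $T_j(r_l)$ generators introduced at each step. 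This per-domain bookkeeping is where the technical heart of the argument lies.
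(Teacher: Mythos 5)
There is a genuine gap, and it sits exactly where you defer to ``the technical heart of the argument.'' Your telescoping step only shows that $A$ applied to the \emph{one particular} vector $\sum_{l}P_l\alpha_l = x_i-x_0$ lands in $\overline{\mathscr{K}}_{i+1,t,1}$; you recognize this, but the repair you propose --- strengthening the induction to $A^lT_j(r_0)\in\overline{\mathscr{K}}_{l+1,t,1}$ for every domain $j$ --- is not a harder version of the same statement, it is a different and in general false one. It amounts to claiming $\mathscr{K}_{l+1,t}\subseteq\overline{\mathscr{K}}_{l+1,t,1}$, which the paper never asserts. The generators $T_i(r_l)$ that $\overline{\mathscr{K}}_{l+1,t,1}$ adds at step $l$ are the domain restrictions of the single vector $r_l = r_{l-1}-AP_l\alpha_l$, i.e., of one specific linear combination of the columns of $AP_l$; they carry no information that would let you separate $A^lT_1(r_0)$ from $A^lT_2(r_0)$, and so on. Already for $k=2$, $t=2$: the new generators $T_1(r_1),T_2(r_1)$ are restrictions of $w=c_1AT_1(r_0)+c_2AT_2(r_0)$, and generically $AT_1(r_0)\notin span\{T_1(r_0),T_2(r_0),T_1(w),T_2(w)\}$. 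So the joint induction you postpone cannot be carried out, and the argument does not close.

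The paper's proof avoids all of this by never touching powers of $A$. It invokes the identity $\mathcal{K}_k = span\{r_0,Ar_0,\dots,A^{k-1}r_0\}=span\{r_0,r_1,\dots,r_{k-1}\}$ --- the residuals span the Krylov subspace --- and then writes each residual as the sum of its domain components, $r_j=\mathscr{T}_j\,\mathbbm{1}_t=\sum_{i=1}^{t}T_i(r_j)$, which are precisely the generators of $\overline{\mathscr{K}}_{k,t,1}$. You already use the second ingredient (you write $r_i=\sum_jT_j(r_i)$ inside your inductive step); what is missing is the first. Note that recovering $A^ir_0$ from the residuals is exactly the content of that spanning identity, so your induction on powers of $A$ cannot succeed without it --- and once you have it, no induction and no per-domain bookkeeping is needed.
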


\begin{proof}
 Let $y \in  \mathcal{K}_k$ where $\mathcal{K}_k = span\{ r_0, Ar_0,.., A^{k-1}r_0\} = span\{ r_0, r_1, r_2, ..., r_{k-1}\}$.  Then,  
 \begin{eqnarray}y&=&  \sum_{j=0}^{k-1} a_jr_j = \sum_{j=0}^{k-1} a_j\mathscr{T}_j*\mathbbm{1}_t = \sum_{j=0}^{k-1} \sum_{i=1}^t a_jT_i(r_j) \in \overline{\mathscr{K}}_{k,t,1} \nonumber \end{eqnarray} 
 since $r_j =  \mathscr{T}_j*\mathbbm{1}_t = [T_1(r_j) \,T_2(r_j)\, ....\, T_t(r_j)]*\mathbbm{1}_t$ , 
 where $\mathbbm{1}_t$ is a $t \times 1$ vector of ones, and $\mathscr{T}_j = [T_1(r_j) \,T_2(r_j)\, ....\, T_t(r_j)] = [T(r_j)]$ is the matrix containing the $t$ vectors of $T(r_j)$.
\end{proof}\vspace{+2mm}
 
  Then one possible s-step reformulation of MSDO-CG would be to compute basis vectors of $\overline{\mathscr{K}}_{k,t,s}$ and use them to update the solution and the residual, similarly to the s-step SRE-CG versions. At iteration $k$ of the s-step MSDO-CG method (Algorithm \ref{alg:sstepMSDO-CG}), the $st$ vectors \vspace{+1mm}
$$T(r_{k-1}), AT(r_{k-1}), ..., A^{s-1}T(r_{k-1}) \vspace{+1mm}$$ 
are computed and A-orthonormalized similarly to the s-step SRE-CG2 method, and stored in the $n \times st$ matrix $V_{k}$. Then, these $st$ A-orthonormalized vectors are used to define $\tilde{\alpha}_k = V_{k}^t r_{k-1}$ and update $x_k = x_{k-1} + V_{k}\tilde{\alpha}$ and $r_k = r_{k-1} - AV_{k}\tilde{\alpha}$.

  \begin{algorithm}[h!]
\centering
\caption{ s-step MSDO-CG  }
{\renewcommand{\arraystretch}{1.3}
\begin{algorithmic}[1]
\Statex{\textbf{Input:} $A$,  $n \times n$ symmetric positive definite matrix; $k_{max}$, maximum allowed iterations}
\Statex{\qquad \quad $b$,  $n \times 1$ right-hand side; $x_0$, initial guess; $\epsilon$, stopping tolerance; $s$, s-step }
\Statex{\textbf{Output:} $x_k$, approximate solution of the system $Ax=b$}
\State$r_0 = b - Ax_0$, $\rho_0 = ||r_0||_2$ , ${\rho} = \rho_0$,  $k = 1$; 
\While {( ${\rho} > \epsilon {\rho_0} $ and $k < k_{max}$ )}
\If {($k==1$)}
\State A-orthonormalize $W_1 = \mathscr{T}_0$,   let $V = W_1$ and $Q = W_1$ 
\Else 
\State A-orthonormalize $W_1 = \mathscr{T}_{k-1}$ against $Q$
\State A-orthonormalize $W_1$,  let $V = W_1$ and $Q = [Q \; W_1]$ 
\EndIf
\For {($i=1:s-1$)} 
\State A-orthonormalize $W_{i+1} = AW_i$ against $Q$
\State A-orthonormalize $W_{i+1}$,  let $V = [V \; W_{i+1}]$ and $Q = [Q \; W_{i+1}]$ 
\EndFor
\State $\tilde{\alpha} = (V^t r_{k-1})$ 
\State $x_k = x_{k-1} + V\tilde{\alpha} $ 
\State $r_k = r_{k-1} - AV\tilde{\alpha} $  
\State $\rho = ||r_{k}||_2$, $k = k+1$  
\EndWhile
\end{algorithmic}}
\label{alg:sstepMSDO-CG}
\end{algorithm}

Note that for $s=1$, the s-step MSDO-CG method is reduced to a modified version of MSDO-CG. Although the s-step MSDO-CG method for $s=1$ is different algorithmically than the MSDO-CG method, but they converge in the same number of iterations as shown in section \ref{sec:SRNum} due to their theoretical equivalence. Moreover, each iteration of the s-step MSDO-CG method with $s>1$ is not equivalent to $s$ iterations of the modified version of MSDO-CG, since the constructed bases of $\overline{\mathscr{K}}_{k,t,s}$ and $\overline{\mathscr{K}}_{ks,t,1}$ are different. For example, in the second iteration of s-step MSDO-CG ($k = 2$), $T(r_1), AT(r_1),...,A^{s-1}T(r_1)$ are computed. Whereas in the second $s$ iterations of the modified version of MSDO-CG ($ks = 2s$), the vectors $T(r_s), T(r_{s+1}), ..., T(r_{2s-1})$ are computed.

The communication avoiding MSDO-CG differs from the s-step version (Algorithm \ref{tab:MSDOCG}) in the basis construction where at the $k^{th}$ iteration the $st$ vectors $T(r_{k-1}), AT(r_{k-1}),...,A^{s-1}T(r_{k-1})$ are first computed, and then A-orthonormalized against previous vectors and against themselves. Thus the communication avoiding MSDO-CG algorithm is  Algorithm \ref{tab:MSDOCG} with the replacement of lines 3-12 by the  CA-Arnoldi A-orthonormalization Algorithm \ref{alg:CA-Arnoldi}. However, Algorithm \ref{alg:CA-Arnoldi} is slightly modified,  where in line 2 $W_j = W_{j-1}$ rather than $W_j = AW_{j-1}$, with $W_{j-1} =\mathscr{T}_{k-1} = [T(r_{k-1})]$ for $k\geq 1$.

The advantage of building the modified Enlarged Krylov subspace basis is that at iteration $k$,  each of the $t$ processors can compute the $s$ basis vectors
$$T_i(r_{k-1}), AT_i(r_{k-1}), A^2T_i(r_{k-1}),..., A^{s-1}T_i(r_{k-1})  $$
 independently, where $T_i(r_{k-1})$ is the projection of the vector $r_{k-1}$ on the $i^{th}$ domain of the matrix $A$, i.e. a vector of all zeros except at $n/t$ entries that correspond to the $i^{th}$ domain. Thus, there is no need for communication avoiding kernels, since processor $i$ needs a part of the matrix $A$ and a part of the vector $r_{k-1}$ to compute the $s$ vectors. As a consequence, assuming that enough memory is available, then any preconditioner can be applied to the CA MSDO-CG since the Matrix Powers Kernel is not used to compute the basis vectors, as discussed in section \ref{sec:pm}.
 
 \section{Numerical Stability and Convergence}\label{sec:SRNum}
We compare the convergence behavior of the different introduced s-step enlarged CG versions and their communication avoiding versions for solving the system $Ax = b$ using different number of partitions ($t = 2, 4, 8, 16, 32$, and $64$ partitions) and different number of $s$-values (1, 2, 3, 4, 5, 8, 10).  Similarly to the enlarged CG methods \cite{EKS}, the matrix $A$ is first reordered using Metis's kway partitioning \cite{metis} that defines the $t$ subdomains. Then $x$ is chosen randomly using MATLAB's rand function and the right-hand side is defined as $b = Ax$. The initial iterate is set to $x_0 = 0$, and the stopping criteria tolerance is set to $tol = 10^{-8}$ for all the matrices, except {\poisso} ($tol = 10^{-6}$).

The  characteristics of the test matrices are summarized in Table \ref{tab:testmatrices}.  The  {\poisso} matrix is a block tridiagonal matrix obtained from Poisson's equation using MATLAB's  ``gallery(`poisson',100)''. 
The remaining matrices, referred to as {\nho}, {\skyo}, {\skyto}, and {\anio}, arise from different boundary value problems of convection diffusion equations, and generated using FreeFem++ \cite{freefem}. For a detailed description of the test matrices, refer to \cite{EKS}.
 
 \begin{table}[h!]
\centering
{\renewcommand{\arraystretch}{1.4}\footnotesize
\caption{The test matrices}
\begin{tabular}{|c|c|c|c|c|}
\hline
\textbf{Matrix} & \textbf{Size} & \textbf{Nonzeros }  & \textbf{2D/3D} & \textbf{Problem}\\ \hline 
{\poiss} & 10000 & 49600 & 2D & Poisson equations \\
{\nh} & 10000 &49600  & 2D & Boundary value \\
{\sky} & 10000 & 49600 & 2D & Boundary value \\
{\skyt} & 8000 & 53600  & 3D &  Skyscraper \\
{\ani} & 8000& 53600  & 3D & Anisotropic Layers \\
\hline
\end{tabular}\label{tab:testmatrices}}
\end{table}

The first phase in all the discussed algorithms, is building the A-orthonormal basis by A-orthonormalizing a set of vectors against previous vectors using Classical Gram Schmidt A-orthonormalization (CGS), CGS2, or MGS, and then against themselves using A-CholQR \cite{A-ortho} or Pre-CholQR \cite{A-ortho2}.
As discussed in \cite{sophiethesis}, the combinations CGS2+A-CholQR  and CGS2+Pre-CholQR are both numerically stable and require less communication. In this paper, we test the introduced methods using CGS2 (Algorithm 18 in \cite{sophiethesis}), A-CholQR (Algorithm 21 in \cite{sophiethesis}), and Pre-CholQR (Algorithm 23 in \cite{sophiethesis}). Based on the performed testing, Pre-CholQR is numerically more stable than A-CholQR. However, for most of the tested cases, the versions with CGS2+A-CholQR or CGS2+Pre-CholQR A-orthonormalization converge in the same number of iterations.
 
 In Table \ref{tab:SRECG} we compare the convergence behavior of the different SRE-CG versions with respect to number of partitions $t$ and the $s$ values. The restructured SRE-CG is a reordered version of SRE-CG, where the same operations of $s$ iterations are performed, but in a different order. In addition, the check for convergence is done once every $s$ iterations. 
Thus the restructured SRE-CG Algorithm \ref{alg:restSRE-CG} converges in $s * k_s$ iterations. In Table \ref{tab:SRECG}, $k_s$ is shown rather than $s*k_s$, for comparison purposes with the s-step versions. Moreover, for $s=1$, the restructured SRE-CG is reduced to SRE-CG, and it converges in $k$ iterations. 

In case in Algorithm \ref{alg:restSRE-CG}, the second inner for loop is replaced by  a while loop with a check for convergence ($||r_{i-1}||_2> \epsilon ||r_0||_2$), then the Algorithm converges in exactly $s*\ceil{\frac{k}{s}}$ iterations, where SRE-CG converges in $k$ iterations and $k \leq s*\ceil{\frac{k}{s}} \leq  k + s-1$.

\begin{table}[h!]
\setlength{\tabcolsep}{2pt}
\caption{\label{tab:SRECG} Comparison of the convergence of different SRE-CG versions  (restructured SRE-CG, s-step SRE-CG, and CA SRE-CG with Algorithm \ref{alg:CA-Arnoldi2}), with respect to number of partitions $t$ and $s$ values. }
  \centering
    \renewcommand{\arraystretch}{1.2}
  \begin{tabular}{||c||c||c||c|c|c|c|c|c|c||c|c|c|c|c|c||c|c|c||}
    \cline{4-19} 
     \cline{4-19}
    \multicolumn{1}{c}{}& \multicolumn{1}{c}{}& \multicolumn{1}{c||}{} & \multicolumn{7}{c||}{\multirow{2}{*}{\bf Restructured SRE-CG }} & \multicolumn{6}{c||}{\multirow{2}{*}{\bf s-step SRE-CG }}  & \multicolumn{3}{c||}{\multirow{2}{*}{\bf CA SRE-CG}}\\
        \multicolumn{1}{c}{}& \multicolumn{1}{c}{}& \multicolumn{1}{c||}{} & \multicolumn{7}{c||}{\multirow{2}{*}{}} & \multicolumn{6}{c||}{\multirow{2}{*}{}}  & \multicolumn{3}{c||}{{\bf }}\\
    \cline{2-19}
    \multicolumn{1}{c||}{} & \multicolumn{1}{c||}{\bf CG} & \backslashbox{$\bf t$}{$\bf s$} & \bf 1 &\bf  2  &\bf   3 &\bf  4  &\bf  5 &\bf  8 &\bf  10  &\bf  2 &\bf  3 &\bf  4  &\bf  5 &\bf  8 &\bf  10  &\bf  2 &\bf  3 & \bf 4 \\
    \hline \hline
    \multirow{6}{*}{\rotatebox[origin=c]{90}{\bf \poiss}}
   &\multirow{6}{*}{195} &\bf 2   &193 & 97&65 &49  & 39& 25&	20& 97&65 &49  & 39 &25&	20& 97 &65&49 \\
    \cline{3-19}
   & & \bf 4    &153&	77&	51&	39&	31& 20&	16& 77	&51&39&31& 20&	16& 77&51&39 \\
     \cline{3-19}
    &&\bf  8    &123&	62	&41	&31&	25&16&	13&	62	&41	&31	&25&16&	13&		62&	41	&31   \\    
     \cline{3-19}
    &&\bf 16   &95&	48&	32	&24&	19&12&	10&	48&	32	&24&	19&12&	10&	48&	32	&24 \\
      \cline{3-19}
   & &\bf 32   &70&	35&	24&	18&	14&9&	7&	35&	24&	18&	14&	9&	7&35&	24&	18  \\
     \cline{3-19}
    &&\bf 64    &52&	26&	18&	13&	11&7	&6&	26&	18&	13&	11&	7	&6& 26&	18&	13
  \\
   \hline
  \hline
      \multirow{6}{*}{\rotatebox[origin=c]{90}{\nh} }
   &\multirow{6}{*}{259} &\bf 2   & 245&	123&	82&	62&	49& 31&	25&	123&	82&	62&	49& 31&	25&	123&	82&	62 \\
    \cline{3-19}
   & &\bf 4 & 188&	94&	63&	47&	38&	24&	19& 94&	63	&47	&38& 24&	19&	94	&63	&47\\
     \cline{3-19}
    &&\bf 8    &149	&75&	50&	38&	30&	19&	15& 75&	50&	38&	30&	19&	15& 75&	50&	38   \\    
     \cline{3-19}
    &&\bf 16   & 112&	56&	38&	28&	23&14&	12&	56&	38&	28&	23& 14&	12&	56&	38&	28 \\
      \cline{3-19}
   & &\bf 32   &  82&	41&	28&	21&	17& 11&	9& 41& 28&	21&	17&	11&	9& 41&	28&	21  \\
     \cline{3-19}
     
    &&\bf 64    &60&	30&	20&	15&	12&	8&	6& 30&	20&	15&	12&	8&	6&  30&	20&	15
  \\
   \hline
  \hline
      \multirow{6}{*}{\rotatebox[origin=c]{90}{\sky}}
   &\multirow{6}{*}{5951} &\bf 2   & 5526&	2763&	1842&	1395&	1116& 708 & 558&	2763&	1842&	1395&	1116& 708 & 558&	 2793&	1854&	x \\
    \cline{3-19}
   & &\bf 4 & 4526&	2263&	1521&	1141&	913&571&457&		2263&	1521&	1141&	913&571&457&	  2328&	1575 &	x\\
     \cline{3-19}
    &&\bf 8    &2843&	1423&	949	&712&	575& 356&288&	1423&	949	&712&	575& 356&334	&  1405&	973&	x   \\    
     \cline{3-19}
    &&\bf 16   & 1770&	885&590&	450&	354&225&177&885&590&	450&	354&225&183&	  910&605	&x \\
      \cline{3-19}
   & &\bf 32   &  999&	500&	333&	250&	200&125&100&	500&	333&	250&	200&125&x&  	492&	340	&x   \\
     \cline{3-19}
     
    &&\bf 64    &507&	255&	169&	128&	102&64&51&255&169&128&102&x&x& 257&179&x
  \\
   \hline
  \hline
      \multirow{6}{*}{\rotatebox[origin=c]{90}{\skyt }}
  &\multirow{6}{*}{902} &\bf 2&829&435&290&218&174&109&87&435&290&218&174&109&87& 426&	285&	x \\
   \cline{3-19}
  & &\bf 4 & 745&	382&255&	191&	149&98&78&382&255&191&149&142&473& 373&251&x\\
    \cline{3-19}
    &&\bf 8&590&295&197&148&118&74&59&295&197&148&118&110&199& 294&198&	x   \\    
     \cline{3-19}
   &&\bf 16   & 436	&218&146&109&89&56&45& 218&146&109&89&58&74& 223&150&x\\
     \cline{3-19}
   & &\bf 32   &  279	&142&93&	71&57&36&29&142&93&71&57&x&x& 140&97&x   \\
     \cline{3-19}    
    &&\bf 64    &157	&79&	53&	40&	32&20&16	&79&	53&	40&	32&314&250&	 78&	54&x\\
   \hline
  \hline  
       \multirow{6}{*}{\rotatebox[origin=c]{90}{\ani}}
   &\multirow{6}{*}{4146} &\bf 2 &4005&2030&1335&1015&801&510&406&2030&1335&1015&801&510&406&  1985&1346&x\\
    \cline{3-19}
   & &\bf 4 & 3570&	1785&	1190&	909&	714&464&357&1785&1190&909&714&464&357&   1776&	1201&x\\
     \cline{3-19}
    &&\bf 8    &3089&1612&	1075&806&645&403&325&1612&1075&x&x&x&x&1548&1070&	x  \\    
     \cline{3-19}
    &&\bf 16   & 2357&1219&815&	610&	488&305&244&	1219&815&x&x&x&x& 1169&800&x\\
      \cline{3-19}
   & &\bf 32   & 1640&820&552&410&328&205&164&820&552&1686&2729&1804&1499&816&549&x   \\
     \cline{3-19}    
    &&\bf 64    &928	&464&315&232&189&116&95&464&315&	792&	777&	492&501& 453&	316&x\\
  \hline
 \hline
  \end{tabular}\vspace{-5mm}
\end{table}

 Thus, it is expected that the restructured SRE-CG (Algorithm \ref{alg:restSRE-CG}) converges in $k_1$ iterations, where $k_1 \geq s*\ceil{\frac{k}{s}}$. In case SRE-CG converges in $k$ iterations, and $k$ is divisible by $s$, then Algorithm \ref{alg:restSRE-CG} converges in exactly $k_1 = s*\ceil{\frac{k}{s}} = k$ iterations. On the other hand, if $k$ is not divisible by $s$, then Algorithm \ref{alg:restSRE-CG} either converges in $k_1 = s*\ceil{\frac{k}{s}} \leq  k + s-1$, or it converges in $k_1 \geq k+s$ iterations. The first case occurs when the norm of the residual in the  $s*\ceil{\frac{k}{s}}$ iteration remains less than $tol*||r_0||_2$. Otherwise, if the L2 norm of the residual fluctuates, then Algorithm \ref{alg:restSRE-CG} requires slightly more iterations to converge. 
 
 For the matrices {\poisso} and {\nho}, the restructured SRE-CG (Algorithm \ref{alg:restSRE-CG}) converges in exactly $k_1 = s*\ceil{\frac{k}{s}}$ iterations. For the other matrices, the three discussed cases are observed, i.e. the restructured SRE-CG  converges in $k_1$ iterations where $ k_1 \geq s*\ceil{\frac{k}{s}}$. For example, for the matrix {\skyo } with $t = 32$ and $2 \leq s \leq 10$, Algorithm \ref{alg:restSRE-CG} converges in $k_1 = s*\ceil{\frac{k}{s}}$ iterations. But for $s = 4$, Algorithm \ref{alg:restSRE-CG} converges in $k_1 = s*\ceil{\frac{k}{s}} + j$ iterations where $j=0 $ for $t=32$, $j = 4$ for $t = 8$ or $64$, $j=28$ for $t = 16$, $j=36$ for $t=4$, and $j= 52$  for $t=2$.

The s-step SRE-CG method (Algorithm \ref{alg:sstepSRE-CG1}) differs from the restructured version in the update of the approximate solutions $x_k$. As discussed in section \ref{sec:sstepS}, if there is no loss of A-orthogonality of the basis, then the s-step SRE-CG method should converge in $k_s$ iterations, where the restructured SRE-CG method converges in $k_1 = s*k_s$ iterations. This is the case for the matrices {\poisso } and {\nho} for all the tested $t$ and $s$ values ($2\leq t \leq 64$ and $2\leq s \leq 10$).

 On the other hand, for the remaining 3 matrices for some values of $s$ and $t$, the s-step SRE-CG method converges in $k_s+j$ iterations due to loss of A-orthogonality of the basis. For example, for {\skyo } matrix with $t=2,4$ and $2\leq s \leq 10$ the s-step SRE-CG method converges in exactly $k_s$ iterations. Similarly for $t=8,16, 32$ with $2\leq s \leq 8$, and for $t=64$ with $2\leq s \leq 5$. But, for $t=8,16$ with $8\leq s \leq 10$, s-step SRE-CG converges in $k_s+j$ iterations. However, for $t=32$ with $s=10$ and $t=64$ with $8\leq s \leq 10$, the s-step SRE-CG  requires more iterations to converge than the SRE-CG does for the corresponding $t$, that is why an $\times$ is placed in table \eqref{tab:SRECG}. A similar convergence behavior is observed for the matrix {\skyto}.

As expected, the Communication-Avoiding SRE-CG method (Algorithm \ref{alg:sstepSRE-CG1} with CA-Arnoldi A-orthonormalization Algorithm \ref{alg:CA-Arnoldi}) is numerically unstable  due to the enlarged monomial basis construction. Unlike the s-step version, at the $i^{th}$ iteration the $st$ vectors $AW, A^{2}W, ...,A^{s}W$ are first computed and stored in $V$, then A-orthonormalized with respect to the $(s+1)t$ previous vectors and against themselves, where $W$ is an $n \times t$ matrix containing the A-orthonormalized $A^{s(i-1) - 1}{T}(r_0)$ vectors. 
To stabilize the CA-Arnoldi A-orthonormalization (Algorithm \ref{alg:CA-Arnoldi}),  the first $t$ vectors $AW$ are A-orthonormalized with respect to the previous vectors and against themselves, and then the $(s-1)t$ vectors $A(AW), A^{2}(AW),...,A^{s-1}(AW)$ are computed, as shown in Algorithm \ref{alg:CA-Arnoldi2} . 

In Table \ref{tab:SRECG}, we test the CA SRE-CG method, where the $st$ vectors are A-orthonormalized against the previous $(s+1)t$ vectors using Algorithm \ref{alg:CA-Arnoldi2} for $k>1$. The CA SRE-CG with Algorithm \ref{alg:CA-Arnoldi2} converges at a similar rate as the s-step version for ill-conditioned matrices, such as {\skyo}, {\skyto}, and {\anio}, with $s=2, \mbox{ and }3$ only. 
 However, for the matrices {\nho} and {\poisso} CA SRE-CG converges in the same number of iterations as s-step SRE-CG, even for $s\geq 4$ (not shown in the table). This implies that CA SRE-CG should converge in approximately $\ceil{\frac{k}{s}}$ iterations for $s \geq 4$, once the ill-conditioned systems are preconditioned.
 
 \begin{algorithm}[h!]
\centering
\caption{Modified CA-Arnoldi A-orthonormalization}
{\renewcommand{\arraystretch}{1.3}
\begin{algorithmic}[1]
\Statex{\textbf{Input:} $W_{j-1}$,  $n \times t$ matrix; $k$, iteration }
\Statex{\qquad \quad $Q$,  $n \times m$ matrix, $m = (s+1)t$ in CA SRE-CG and $i = kst$ in CA SRE-CG2}
\Statex{\textbf{Output:} $V$, the $n \times st$ matrix containing the A-orthonormalized $st$ computed vectors}
\State \textbf{if}{ ($k==1$)} \textbf{then} Let $W_j = W_{j-1}$
\State \textbf{else} Let $W_j = AW_{j-1}$, A-orthonormalize $W_j$ against $Q$. 
\State \textbf{end if}
\State  A-orthonormalize $W_j$, let $V = W_j$.
\For {($i=1:s-1$)} 
\State Let $W_{j+i} = AW_{j+i-1}$ 
\State Let $V = [V \; W_{j+i}]$ 
\EndFor
\State \textbf{if}{ ($k>1$)} \textbf{then} A-orthonormalize $V$ against $Q$ \textbf{end if}
\State A-orthonormalize $V$
\end{algorithmic}}
\label{alg:CA-Arnoldi2}
\end{algorithm}

In Table \ref{tab:SRECG2}, we compare the convergence behavior of the different SRE-CG2 versions with respect to number of partitions $t$ and the $s$ values. In general, a similar behavior to the  corresponding SRE-CG versions in Table \ref{tab:SRECG} is observed.
 Yet, the SRE-CG2 versions converge faster than their corresponding SRE-CG versions and are numerically more stable. 

For $s=1$, the restructured SRE-CG2 method is equivalent to the SRE-CG2 method and converges in $k$ iterations. For $s>1$, the restructured SRE-CG2 method converges in $k_1$ iterations, where $k_1 = s*k_s \geq s*\ceil{\frac{k}{s}}$, similarly to the restructured SRE-CG method. The s-step SRE-CG2 converges in $k_s$ iterations for $s\geq 2$ for all the tested matrices. As for the communication avoiding version (Algorithm \ref{alg:sstepSRE-CG2}) with the modified CA-Arnoldi A-orthonormalization (Algorithm \ref{alg:CA-Arnoldi2}), it does not converge as fast as the s-step version for ill-conditioned matrices, such as \skyo, \skyto, and \anio, with large $s$-values ($s \geq 4$). Yet, CA SRE-CG2 converges in the same number of iterations as s-step SRE-CG2, for the matrices {\nho} and {\poisso}, even with $s\geq 4$ (not shown in the table).

\begin{table}[h!]
\setlength{\tabcolsep}{2pt}
\caption{\label{tab:SRECG2} Comparison of  convergence of different SRE-CG2 versions  (restructured SRE-CG2, s-step SRE-CG2, and CA SRE-CG2 with Algorithm \ref{alg:CA-Arnoldi2}) with respect to number of partitions $t$ and  $s$ values. }
  \centering
    \renewcommand{\arraystretch}{1.2}
  \begin{tabular}{||c||c||c||c|c|c|c|c|c|c||c|c|c|c|c|c||c|c|c||}
    \cline{4-19} 
     \cline{4-19}
    \multicolumn{1}{c}{}& \multicolumn{1}{c}{}& \multicolumn{1}{c||}{} & \multicolumn{7}{c||}{\multirow{2}{*}{\bf Restructured }} & \multicolumn{6}{c||}{\multirow{2}{*}{\bf s-step }} & \multicolumn{3}{c||}{\multirow{2}{*}{\bf CA}}\\
     \multicolumn{1}{c}{}& \multicolumn{1}{c}{}& \multicolumn{1}{c||}{}  & \multicolumn{7}{c||}{\multirow{2}{*}{\bf SRE-CG2 }} & \multicolumn{6}{c||}{\multirow{2}{*}{\bf SRE-CG2}} & \multicolumn{3}{c||}{\multirow{2}{*}{\bf SRE-CG2}}\\
        \multicolumn{1}{c}{}& \multicolumn{1}{c}{}& \multicolumn{1}{c||}{} & \multicolumn{7}{c||}{\multirow{2}{*}{}} & \multicolumn{6}{c||}{\multirow{2}{*}{}} & \multicolumn{3}{c||}{\multirow{2}{*}{}}\\
    \cline{2-19}
    \multicolumn{1}{c||}{} & \multicolumn{1}{c||}{\bf CG} & \backslashbox{$\bf t$}{$\bf s$} & \bf 1 &\bf  2  &\bf   3 &\bf  4  &\bf  5 &\bf  8 &\bf  10 &\bf  2 &\bf  3 &\bf  4  &\bf  5 &\bf  8 &\bf  10 &\bf  2 &\bf  3 & \bf 4 \\
    \hline \hline
         \multirow{6}{*}{\rotatebox[origin=c]{90}{\bf \poiss}}

   &\multirow{6}{*}{195} &\bf 2   &193&	97	&65&	49&	39 &25 & 20&	97&	65&	49&	39& 25 & 20& 97&	65&	49 \\
    \cline{3-19}
   & & \bf 4    &153&	77&	51&	39&	31& 20 & 16& 77	&51&39&31& 20 &16 & 77&51&39 \\
     \cline{3-19}
    &&\bf  8    &123&	62	&41	&31&	25& 16	 &13 & 62	&41	&31	&25&	 16 & 13& 62&	41	&31   \\    
     \cline{3-19}
    &&\bf 16   &95&	48&	32	&24&	19&12	 &10 & 48&	32	&24&	19&12	 & 10& 48&	32	&24 \\
      \cline{3-19}
   & &\bf 32   &70&	36&	24&	18&	14&	9 & 8& 35&	24&	18&	14&9 &8 & 	35&	24&	18  \\
     \cline{3-19}
    &&\bf 64    &52&	26&	18&	13&	11& 7	 &6 & 26&	18&	13&	11&	7 &6 & 26&	18&	13
  \\
   \hline
  \hline
       \multirow{6}{*}{\rotatebox[origin=c]{90}{\nh}}
   &\multirow{6}{*}{259} &\bf 2   & 243&	122&	81&	61&	49&31 &25 & 	122&	81&	61&	49&31 &25 & 	123&	81&	61 \\
    \cline{3-19}
   & &\bf 4 & 194&	97&	65&	49&	39& 25 & 20& 	97&	65	&49	&39&	25  &20 & 94	&65&47\\
     \cline{3-19}
    &&\bf 8    &150	&75&	50&	38&	30&	19 & 15& 75&	50&	38&	30&	19 &15 & 75&	50&	38   \\    
     \cline{3-19}
    &&\bf 16   & 113&	57&	38&	29&	23&15	 & 12& 57&	38&	29&	23&	15 & 12& 56&	38&	29 \\
      \cline{3-19}
   & &\bf 32   &  84&	42&	28&	21&	17&11 & 9& 	42&	28&	21&	17&	11 &9 & 41&	28&	21  \\
     \cline{3-19}
     
    &&\bf 64    &60&	30&	20&	15&	12&	8 &6 & 30&	20&	15&	12&8	 & 6& 30&	20&	15
  \\
   \hline
  \hline
       \multirow{6}{*}{\rotatebox[origin=c]{90}{\sky}}
   &\multirow{6}{*}{5951} &\bf 2   &1415&	708&	472&	354&	283&	177 &  142 & 708&	472&	354&	283&	177  &142 & 708&	472&	365 \\
    \cline{3-19}
   & &\bf 4 & 756&  	378&	252&	189&	152&	 95&76 & 378&	252&	189&	152&	95 &76 & 378&	252& 576\\
     \cline{3-19}
    &&\bf 8    &399&   200&	133&	100&	80& 50 &40 & 200&	133&	100&	80&	 50&40 & 199&	133&	295  \\    
     \cline{3-19}
    &&\bf 16   & 219&	110&	73&	55&	44&28  &22 & 	110&	73&	55&	44&28 &22 & 	109&	73&	147\\
      \cline{3-19}
   & &\bf 32   & 125&	63&	42&	32&	25& 16 &13 & 	63&	42&	32&	25&	16 &13 & 63&	42&	77  \\
     \cline{3-19}
     
    &&\bf 64    &74&  	37&	25&	19&	15&10 &8 & 	37&	25&	19&	15&	10 &8 & 37&	25&	39
  \\
   \hline
  \hline
       \multirow{6}{*}{\rotatebox[origin=c]{90}{\skyt}}

   &\multirow{6}{*}{902} &\bf 2   & 	570&	285&	190&	143&	114&72 &57 & 	285&	190&	144&	114	& 72& 57& 285&	190&	155\\
    \cline{3-19}
   & &\bf 4 & 375&	190&	125&	95&	75&48 & 38& 	190&	125&	95&	75&	48 & 38& 190	&127&	101\\
     \cline{3-19}
    &&\bf 8    &213&	107&	71&	54&	43&27	 &22 & 107&	71&	54&	43&27 & 22& 	107&	71&	224  \\    
     \cline{3-19}
    &&\bf 16   & 117&	59&	39&	30&	24&15 & 12& 	59&	39&	30&	24& 15 &12 &	59&	39&	124\\
      \cline{3-19}
   & &\bf 32   &  69&	35&	23&	18&	14&9 & 7& 	35&	23&	18&	14&9 & 7& 	35&	23&	69   \\
     \cline{3-19}    
    &&\bf 64    &43&	22&	15&	11&	9&6 &5 & 	22&	15&	11&	9& 6& 5& 	22&	15&	x  \\
   \hline
  \hline  
       \multirow{6}{*}{\rotatebox[origin=c]{90}{\ani}}
   &\multirow{6}{*}{4146} &\bf 2   & 	875&	438&	292&	219&	175&	110 &88 & 438&	292&	219&	175&	110 &88 & 438&	292&	219 \\
    \cline{3-19}
   & &\bf 4 &673&	340&	229&	170&	131&81 &66 & 	340&	229&	170&	131&81	 &66 & 340&	229&	170\\
     \cline{3-19}
    &&\bf 8    &449&	225&	150&	113&	90&	57 &45 & 225&	150&	113&	90&57 &45 & 	225&	150&	113 \\    
     \cline{3-19}
    &&\bf 16   & 253&	127&	85&	64&	51& 32& 26& 	127&	85&	64&	51&32	 & 26& 127&	85&	78\\
      \cline{3-19}
   & &\bf 32   & 148&	74&	49&	37&	30&19 &15 & 	74&	50&	37&	30&19 & 15& 	74&	50&	58   \\
     \cline{3-19}    
    &&\bf 64    &92&	46&	31&	23&	19&12 & 10& 	46&	31&	23&	19&12	 & 10& 46&	31&	31\\
   \hline
  \hline
  \end{tabular}
\end{table}
 
\begin{table}[h!]
\setlength{\tabcolsep}{2pt}
\caption{\label{tab:MSDOCG} Comparison of the convergence of different MSDO-CG versions  (s-step MSDO-CG, CA MSDO-CG with Algorithm \ref{alg:CA-Arnoldi}, and CA MSDO-CG with Algorithm \ref{alg:CA-Arnoldi2}) with respect to number of partitions $t$ and  $s$ values. }
  \centering
    \renewcommand{\arraystretch}{1.2}
  \begin{tabular}{||c||c||c||c||c|c|c|c|c|c|c|c|c|||c|c|c|c|||c|c|c|c|c|c||}
    \cline{4-23} 
     \cline{4-23}
    \multicolumn{1}{c}{}& \multicolumn{1}{c}{}& \multicolumn{1}{c||}{} &\multicolumn{1}{c||}{\multirow{2}{*}{\bf MSD }} & \multicolumn{9}{c||}{\multirow{2}{*}{\bf s-step }} & \multicolumn{4}{c||}{\multirow{2}{*}{\bf CA MSDO-CG}}& \multicolumn{6}{c||}{\multirow{2}{*}{\bf CA MSDO-CG}}\\
     \multicolumn{1}{c}{}& \multicolumn{1}{c}{}& \multicolumn{1}{c||}{} & \multicolumn{1}{c||}{\multirow{2}{*}{\bf OCG }}  & \multicolumn{9}{c||}{\multirow{2}{*}{\bf MSDO-CG }} & \multicolumn{4}{c||}{\multirow{2}{*}{\bf  with Algorithm\ref{alg:CA-Arnoldi}}}& \multicolumn{6}{c||}{\multirow{2}{*}{\bf with Algorithm\ref{alg:CA-Arnoldi2}}}\\
        \multicolumn{1}{c}{}& \multicolumn{1}{c}{}& \multicolumn{1}{c||}{} & \multicolumn{1}{c||}{} & \multicolumn{9}{c||}{\multirow{2}{*}{}} & \multicolumn{4}{c||}{\multirow{2}{*}{}} & \multicolumn{6}{c||}{\multirow{2}{*}{}} \\
    \cline{2-23}
    \multicolumn{1}{c||}{} & \multicolumn{1}{c||}{\bf CG} & \backslashbox{$\bf t$}{$\bf s$} & \bf 1 &  \bf 1 &\bf  2  &\bf   3 &\bf  4  &\bf  5 &  \bf 6 &  \bf 7 &\bf  8 &\bf  10 &\bf  2 &\bf  3 &\bf  4  &\bf  5 &\bf  2 &\bf  3 &\bf  4  &\bf  5 &\bf  6 &\bf  7    \\
    \hline \hline
         \multirow{6}{*}{\rotatebox[origin=c]{90}{\bf \poiss}}
   &\multirow{6}{*}{195} &\bf 2   &198 &198 & 99	&66&	49&	40 & 33&	28&	23&	18 & 	
   99&	66&	50&	40 &99& 66 & 49& 34&	33&	28 \\
    \cline{3-23}
   & & \bf 4    &166 &166 &	83&	56&	42& 34 & 28& 24	&21&17&  83&	56&	42&	34& 83& 55 &42 & 33&28&25 \\
     \cline{3-23}
    &&\bf  8    &137 &137 &	68	&46	&34&	27& 23	 &19 & 17	&13   	& 69&	46&	36&	28&
68&	 46 & 35& 28&	24	&21   \\    
     \cline{3-23}
    &&\bf 16   &121 &121 &	59&	39	&29&	23&18&16 & 14& 11 
    &61&	41&	31&	25&
	59&38	 & 28&23&19	&16 \\
      \cline{3-23}
   & &\bf 32   &95 &95 &	45&	29&	22&	17&	14 & 12& 10&	8& 48&	32&	25&	20&
   	45&30 &22 & 	18&	15&	13  \\
     \cline{3-23}
    &&\bf 64    &69 &69 & 33&	21&	16&	12& 10	 &9 & 8&	6& 37&	25&	20&	16&
    	33&	22 &16 & 14&	12&	10
  \\
   \hline
  \hline
         \multirow{6}{*}{\rotatebox[origin=c]{90}{\nh}}

   &\multirow{6}{*}{259} &\bf 2   & 255&	255	&127&84&	63&	51&	42&	36&	32&	26&	127&	85&	64&	51& 127&	84&	63&	51&	42& 	37 \\
    \cline{3-23}
   & &\bf 4 & 210&	210	&104	&69	&52&	42&	34&	30&	26&	20&	105&	71&	53&	42&
104&	68&	52&	42&	35&	31\\
     \cline{3-23}
    &&\bf 8    &170&	170	&84	&56	&42&	33&	27&	23&	20&	16&85&	57&	43&	34&
	84&	56&	42&	33&	29&	25   \\    
     \cline{3-23}
    &&\bf 16   & 138&	138	&68	&44	&33&	26&	21&	18&	16&	12&70&	47&	36&	28&
	68&	44&	33&	27&	23&	20 \\
      \cline{3-23}
   & &\bf 32   & 106	&106&51	&33&	25&	19&	16&	13&	12&	10&	54&	36&	28&	23&
51&	33&	25&	21&	18&	16\  \\
     \cline{3-23}
     
    &&\bf 64    &76	&76&	37	&24&	17&	14&	11&	10&	9&	7&41&	28&	22&	17&
	37&	24&	19&	16&	14&	12  \\
   \hline
  \hline
       \multirow{6}{*}{\rotatebox[origin=c]{90}{\sky}}

   &\multirow{6}{*}{5951} &\bf 2   &1539&1539&719&480&360&288&240&206&	180&	144&778&	527&	401&	327& 	720&493&374&307&259&224 \\
    \cline{3-23}
   & &\bf 4 &916&	916&	397&	259&	194&	154&	129&	110&	96&	77&466&	312&x&x&	395&	271&	207&	170&	143&	x\\
     \cline{3-23}
    &&\bf 8    &517&	517&	214&	141&	105&	84&	70&	60&	52&	42&260&	167&	129&	x&	214&	149&	114&	95&	80&	x \\    
     \cline{3-23}
    &&\bf 16   & 277&	277&	122&	81&	60&	47&	40&	34&	30&	24&141&	95&	x&	x&	122&	85&	66&	54&	x&	x\\
      \cline{3-23}
   & &\bf 32   & 192&	192&	74&	48&	36&	28&	23&	20&	17&	14&92&	61&	x&	x&	73&	51&	40&	33&	x&	x  \\
     \cline{3-23}
     
    &&\bf 64    &123&	123&	47&	29&	22&	17&	14&	12&	11&	8&60&	x&	x&	x&	47&	32&	25&	23&	x&	x  \\
   \hline
  \hline
       \multirow{6}{*}{\rotatebox[origin=c]{90}{\skyt}}
       
   &\multirow{6}{*}{902} &\bf 2   & 	637&	633&	334&	211&	169&	139&	111&	89&	81&	66&	339&	235&	180&	153&333&	242&	193&	160&	134&	119\\
    \cline{3-23}
   & &\bf 4 & 374&	373&	205&	137&	103&	81&	68&	58&	51&	41&	204&	138&	107&	86&
206&	140&	109&	89&	75&	66\\
     \cline{3-23}
    &&\bf 8    &224&	224&	112&	74&	56&	44&	37&	32&	28&	22&117&	82&	63&	50&
	112&	78&	61&	49&	42&	37 \\    
     \cline{3-23}
    &&\bf 16   &137&	137&	63&	42&	31&	25&	21&	18&	16&	13&	73&	50&	38&	32&
63&	45&	35&	29&	25&	22\\
      \cline{3-23}
   & &\bf 32   &  89&	89&	38&	25&	19&	15&	13&	11&	9&  8& 45&	30&	23&	x&
	38&	27&	21&	18&	19&	x\\
     \cline{3-23}    
    &&\bf 64    &50&	50&	24&	16&	12&	10&	8&	7&	6&	5&27&	19&	16&	x&
	24&	17&	14&	12&	x&	x \\
   \hline
  \hline  
       \multirow{6}{*}{\rotatebox[origin=c]{90}{\ani}}

   &\multirow{6}{*}{4146} &\bf 2   & 	896&	896&	456&	301&	227&	181&	152&	130&	114&	91&458&	312&	237&	195&	452&	309&	242&	201&	168&	148 \\
    \cline{3-23}
   & &\bf 4 &796&	796&	362&	238&	177&	140&	115&	99&	88&	69&413&	281&	216&	176&	362&	253&	194&	159&	135&	117\\
     \cline{3-23}
    &&\bf 8    &473&	473&	231&	154&	115&	92&	77&	66&	58&	46&	238&	163&	126&	x&231&	158&	120&	98&	83&	x \\    
     \cline{3-23}
    &&\bf 16   & 292&	292&	130&	86&	64&	51&	43&	37&	32&	26&140&	95&	x&	x&	130&	91&	70&	57&	55&	x\\
      \cline{3-23}
   & &\bf 32   &213&	213&	77&	51&	38&	30&	25&	22&	19&	15&97&	61&	x&	x&	76&	55&	42&	35&	x&	x \\
     \cline{3-23}    
    &&\bf 64    &115&	115&	48&	31&	24&	19&	16&	14&	12&	10&	56&	x&	x&	x&48&	34&	27&	24&	x&	x\\
   \hline
  \hline
  \end{tabular}
\end{table}  
 In Table \ref{tab:MSDOCG}, we compare the convergence behavior of MSDO-CG, s-step MSDO-CG,  CA MSDO-CG with Algorithm \ref{alg:CA-Arnoldi}, and CA MSDO-CG with Algorithm \ref{alg:CA-Arnoldi2} (where $W_{j-1} = [T(r_{k-1})]$ and $W_j = W_{j-1}$ for $k \geq 1$) versions with respect to number of partitions $t$ and the $s$ values. We do not test a restructured MSDO-CG since the s-step version is not exactly equivalent to the merging of $s$ iterations of MSDO-CG. The s-step MSDO-CG with $s=1$ is equivalent to a modified version of MSDO-CG which differs algorithmically from MSDO-CG but is equivalent theoretically. Moreover, MSDO-CG and s-step MSDO-CG with $s=1$ converge in the same number of iterations for all $t$ values and matrices. For $s\geq 2$, s-step MSDO-CG converges in $m$ iterations where in most cases $m \leq \ceil{\frac{k}{s}}$ and MSDO-CG converges in $k$ iterations. Moreover, for all the matrices, the s-step MSDO-CG converges for $s = 10$ and all values of $t$.

 Unlike the CA SRE-CG and CA SRE-CG2 with the CA-Arnoldi Algorithm \ref{alg:CA-Arnoldi}, the CA MSDO-CG with Algorithm \ref{alg:CA-Arnoldi} converges for $s= 2$, and $3$, as shown in table \ref{tab:MSDOCG}. The difference is that  in the SRE-CG and SRE-CG2 we are computing a modified block version of the powers method, where $t$ vectors ($T(r_0)$) are multiplied by powers of A and are A-orthonormalized. Thus there is a higher chance that these vectors converge to the largest eigenvector in a very fast rate, leading to a numerically linearly dependent basis. Whereas, in CA MSDO-CG at every iteration we are computing a block version of the powers method but starting with a new set of $t$ vectors, i.e. $T(r_{k-1})$ at the $k^{th}$ iteration. For the matrices {\nho} and {\poisso}, CA MSDO-CG scales even for $s>5$. But for the other matices, as $s$ grows, the CA MSDO-CG requires much more than $\ceil{\frac{k}{s}}$ and $\ceil{\frac{k}{s-1}}$ iterations to converge, due to the stagnation of the relative error.

For the matrices {\nho} and {\poisso}, CA MSDO-CG with Algorithm \ref{alg:CA-Arnoldi2} converges in exactly the same number of iterations as CA MSDO-CG with Algorithm \ref{alg:CA-Arnoldi} and s-step MSDO-CG up to $s=10$.  
On the other hand, the CA MSDO-CG with Algorithm \ref{alg:CA-Arnoldi2} converges faster than CA MSDO-CG with Algorithm \ref{alg:CA-Arnoldi} version for the corresponding $s$ and $t$ values, for the matrices {\skyto} (except for $t = 2,4$), {\skyo}, and {\anio} (except for $t = 2,4$). 
 More importantly, CA MSDO-CG with Algorithm \ref{alg:CA-Arnoldi2} is numerically more stable than CA MSDO-CG with Algorithm \ref{alg:CA-Arnoldi} and CA SRE-CG2 with Algorithm \ref{alg:CA-Arnoldi2}, as it scales up to at least $s=5$, or $6$. Whereas CA SRE-CG2 with Algorithm \ref{alg:CA-Arnoldi2} and CA MSDO-CG with Algorithm \ref{alg:CA-Arnoldi} scales up to $s=3$, or $4$ as shown in Tables \ref{tab:SRECG2} and \ref{tab:MSDOCG}.

As a summary, for the well-conditioned matrices such as {\nho} and {\poisso},the s-step and communication avoiding with Algorithm \ref{alg:CA-Arnoldi2} versions  of SRE-CG and SRE-CG2 converge in the same number of iterations and scale up to at least $s=10$. But the communication avoiding with Algorithm \ref{alg:CA-Arnoldi} version  of SRE-CG and SRE-CG2 do not converge due to the instability in the basis construction, specifically the A-orthonormalization process.
On the other hand, the s-step, communication avoiding with Algorithm \ref{alg:CA-Arnoldi} and communication avoiding with Algorithm \ref{alg:CA-Arnoldi2} versions of MSDO-CG for the matrices {\nho} and {\poisso} converge  in the same number of iterations and scale up to at least $s=10$. Moreover, the corresponding versions of SRE-CG, SRE-CG2, and MSDO-CG converge in approximately the same number of iterations.

For the other matrices, the s-step versions of SRE-CG, SRE-CG2, and MSDO-CG converge and scale up to at least $s=10$, as expected. The communication avoiding with Algorithm \ref{alg:CA-Arnoldi} versions  of SRE-CG and SRE-CG2 do not converge. But the communication avoiding MSDO-CG with Algorithm \ref{alg:CA-Arnoldi} converges. Moreover,  the communication avoiding MSDO-CG with Algorithm \ref{alg:CA-Arnoldi2} scales better than  the communication avoiding SRE-CG2 with Algorithm \ref{alg:CA-Arnoldi2}, even though it might require more iterations. 

\section{The Preconditioned Versions} \label{sec:precCG}
Krylov subspace methods are rarely used without preconditioning. Moreover, Conjugate Gradient is a method for solving symmetric positive definite matrices. For this purpose, split preconditioned versions of the above-mentioned s-step methods for solving the system $L^{-1}AL^{-t}(L^{t}x) = L^{-1}b$ are introduced, where the preconditioner is $M = LL^t$. Then, the numerical stability of the preconditioned methods is briefly discussed.

\subsection{Preconditioned Algorithms}
One possible way for preconditioning the s-step versions is by simply replacing $A$ by $L^{-1}AL^{-t}$ and $b$ by $L^{-1}b$ in the algorithms, where $L^{-1}AL^{-t}y = L^{-1}b$ is first solved and then the solution $x$ is obtained by solving $y = L^{t}x$. In \cite{sophiethesis}, MSDO-CG is preconditioned in this manner (Algorithm 40), where the vectors are $L^{-1}AL^{-t}$-orthonormalized (Algorithms 19 and 22) rather than $A$-orthonormalized. In this paper we will precondition the s-step and communication avoiding methods by avoiding the use of $L^{-1}AL^{-t}$-orthonormalization. 

Given the following system $\widehat{A}\widehat{x} = \widehat{b}$, where $\widehat{A} = L^{-1}AL^{-t}$, $\widehat{x} = L^{t}x$, and $\widehat{b} = L^{-1}b$. The following relations summarized the SRE-CG, SRE-CG2, and modified MSDO-CG methods for this system:
\begin{eqnarray}
 \widehat{\alpha}_k &=& \widehat{V}^t_k \widehat{r}_{k-1} \nonumber \\
 \widehat{x}_k &=& \widehat{x}_{k-1} + \widehat{V}_k\widehat{\alpha}_k \nonumber\\
 \widehat{r}_k &=& \widehat{r}_{k-1} - \widehat{A}\widehat{V}_k\widehat{\alpha}_k \nonumber
\end{eqnarray}

 The difference is in how the $\widehat{V}_k$ vectors are constructed. In the modified MSDO-CG, $\widehat{V}_k$ is set to  $[T(\widehat{r}_{k-1})]$, and then $\widehat{A}$-orthonormalized against all previous vectors.  In SRE-CG and SRE-CG2 methods,
 \begin{eqnarray}\widehat{V}_k = \begin{cases}
 [T(\widehat{r}_0)], & \mbox{ if $k = 1$ }\\
 \widehat{A}\widehat{V}_{k-1}, & \mbox{ if $k \geq 2$ }
 \end{cases} \nonumber \end{eqnarray}
  and then $\widehat{V}_k$ is $\widehat{A}$-orthonormalized against the previous $2t$ vectors (SRE-CG) or against all previous vectors (SRE-CG2). In the three methods, $\widehat{V}_i^t \widehat{A} \widehat{V}_i = I$ and $\widehat{V}_k^t \widehat{A} \widehat{V}_i = 0$ where $i = k-2,k-1$ for SRE-CG and $i < k$ for SRE-CG2 and modified MSDO-CG .

 Note that $\widehat{r}_{k} = \widehat{b} - \widehat{A}\widehat{x}_{k} = L^{-1}b - L^{-1}AL^{-t}L^{t}x_{k} =  L^{-1}(b - Ax_{k}) = L^{-1}r_{k}$. Thus, we derive the corresponding equations for $x_k$, and $r_k$. \vspace{2mm}
 \begin{eqnarray}
 \widehat{\alpha}_k &=& \widehat{V}^t_k \widehat{r}_{k-1} = \widehat{V}^t_k L^{-1}r_{k} = (L^{-t}\widehat{V}_k)^tr_k \nonumber \\
 \widehat{x}_k &=& L^{t}x_k = \widehat{x}_{k-1} + \widehat{V}_k\widehat{\alpha}_k = L^{t}{x}_{k-1} + \widehat{V}_k\widehat{\alpha}_k \;\;\;
 \implies x_k = {x}_{k-1} + (L^{-t}\widehat{V}_k)\widehat{\alpha}_k \nonumber \\
  \widehat{r}_k &=& L^{-1}r_{k} = \widehat{r}_{k-1} - \widehat{A}\widehat{V}_k\widehat{\alpha}_k = L^{-1}{r}_{k-1} - L^{-1}AL^{-t}\widehat{V}_k\widehat{\alpha}_k \nonumber \\
 \implies r_{k} &=& {r}_{k-1} - A(L^{-t}\widehat{V}_k)\widehat{\alpha}_k \nonumber
\end{eqnarray}

Let $V_k = L^{-t}\widehat{V}_k$, then 
\begin{eqnarray}
\widehat{\alpha}_k &=& V_k^tr_k,\nonumber \\
 x_k &=& {x}_{k-1} + {V}_k\widehat{\alpha}_k,\nonumber\\
 r_{k} &=& {r}_{k-1} - A{V}_k\widehat{\alpha}_k.\nonumber
 \end{eqnarray}
 
  Moreover, $T(\widehat{r}_{k}) = T(L^{-1}r_{k})$ and $\widehat{A}\widehat{V}_{k-1} =  L^{-1}AL^{-t}\widehat{V}_{k-1} =  L^{-1}A{V}_{k-1}$. As for the $\widehat{A}$-orthonormalization, we require that  $\widehat{V}_k^t \widehat{A} \widehat{V}_i = 0$ for some values of $i\neq k$. But $$\widehat{V}_k^t \widehat{A} \widehat{V}_i = \widehat{V}_k^t L^{-1}AL^{-t} \widehat{V}_i = (L^{-t}\widehat{V}_k)^tA(L^{-t} \widehat{V}_i) = {V}_k^tA{V}_i.$$ Thus, it is sufficient to A-orthonormalize $V_k = L^{-t}\widehat{V}_k$ instead of  $\widehat{A}$-orthonormalizing $\widehat{V}_k$, where in modified MSDO-CG, $$V_k = L^{-t}[T(\widehat{r}_{k-1})] =  L^{-t}[T(L^{-1}r_{k})],$$ and in SRE-CG and SRE-CG2 $$V_k = L^{-t}\widehat{V}_k = \begin{cases}
 L^{-t}[T(\widehat{r}_0)], & \mbox{ if } k = 1\\
 L^{-t} L^{-1}{A}{V}_{k-1} = M^{-1}A{V}_{k-1}, & \mbox{ if $k \geq 2$ }
 \end{cases}.$$
This summarizes the three methods for $s=1$. In general, for $s>1$ the s-step methods are described in Algorithms \ref{alg:psstepSRE-CG1}, \ref{alg:psstepSRE-CG2}, and \ref{alg:psstepMSDO-CG}. 
As for the communication avoiding versions, in Algorithms \ref{alg:CA-Arnoldi} and \ref{alg:CA-Arnoldi2}, $AW_{j+i-1}$ is replaced by $M^{-1}AW_{j+i-1}$, and $W_{j-1} = L^{-t}[T(L^{-1}r_k)]$, for $k\geq 1$ in CA MSDO-CG and for $k= 1$ in CA SRE-CG and CA SRE-CG2.

If the preconditioner is a block diagonal preconditioner, with $t$ blocks that correspond to the $t$ partitions of the matrix $A$, then $[T(L^{-1}r_k)] = L^{-1}[T(r_k)]$ and $L^{-t}[{T}(L^{-1}r_k)] = M^{-1}[T(r_k)]$. In this case, no need for split preconditioning, similarly to CG. \vspace{-5mm}
 \begin{algorithm}[H]
\centering
\caption{Split preconditioned s-step SRE-CG }
{\renewcommand{\arraystretch}{1.3}
\begin{algorithmic}[1]
\Statex{\textbf{Input:} $A$,  $n \times n$ symmetric positive definite matrix; $k_{max}$, maximum allowed iterations}
\Statex{\qquad \quad $b$,  $n \times 1$ right-hand side; $x_0$, initial guess; $\epsilon$, stopping tolerance; $M = LL^t$; $s$ }
\Statex{\textbf{Output:} $x_k$, approximate solution of the system $L^{-t}AL^t(L^{-t}x)=L^{-t}b$}
\State$r_0 = b - Ax_0$, $\rho_0 = ||r_0||_2$ , $\rho =\rho_0$, $\widehat{r}_0 = L^{-1}r_{0}$, $k = 1$; 
\While {( ${\rho} > \epsilon \rho_0$ and $k < k_{max}$ )}
\State Let $j = (k-1)s+1$
\If {($k==1$)}
\State A-orthonormalize $W_j = L^{-t}[{T}(\widehat{r}_0)]$,  and let $V = W_j$ 
\Else 
\State A-orthonormalize $W_{j} = M^{-1}AW_{j-1}$ against $W_{j-2}$ and $W_{j-1}$
\State A-orthonormalize $W_{j}$ and let $V = W_{j}$  
\EndIf
\For {($i=1:s-1$)} 
\State A-orthonormalize $W_{j+i} = M^{-1}AW_{j+i-1}$ against $W_{j+i-2}$ and $W_{j+i-1}$
\State A-orthonormalize $W_{j+i}$ and let $V = [V \; W_{j+i}]$ 
\EndFor
\State $\widehat{\alpha} = V^t r_{k-1}$, \;\;\; $x_k = x_{k-1} + V \widehat{\alpha} $ 
\State $r_k = r_{k-1} - AV \widehat{\alpha} $,  \;\;\;  $\rho = ||r_{k}||_2$,  \;\;\; $k = k+1$  
\EndWhile
\end{algorithmic}}
\label{alg:psstepSRE-CG1}
\end{algorithm}\vspace{-15mm}
\begin{algorithm}[H]
\centering
\caption{Split preconditioned s-step SRE-CG2  }
{\renewcommand{\arraystretch}{1.3}
\begin{algorithmic}[1]
\Statex{\textbf{Input:} $A$,  $n \times n$ symmetric positive definite matrix; $k_{max}$, maximum allowed iterations}
\Statex{\qquad \quad $b$,  $n \times 1$ right-hand side; $x_0$, initial guess; $\epsilon$, stopping tolerance; $M = LL^t$; $s$ }
\Statex{\textbf{Output:} $x_k$, approximate solution of the system $L^{-t}AL^t(L^{-t}x)=L^{-t}b$}
\State$r_0 = b - Ax_0$, $\rho_0 = ||r_0||_2$ , $\rho = \rho_0 $, $\widehat{r}_0 = L^{-1}r_{0}$, $k = 1$; 
\While {( ${\rho} > \epsilon \rho_0 $ and $k < k_{max}$ )}
\State Let $j = (k-1)s+1$
\If {($k==1$)}
\State A-orthonormalize $W_j = L^{-t}[{T}(\widehat{r}_0)]$, let $Q = W_j$, and $V = W_j$ 
\Else 
\State A-orthonormalize $W_j = M^{-1}AW_{j-1}$ against $Q$
\State A-orthonormalize $W_j$,  let $Q = [Q, \; W_j]$, and $V = W_j$
\EndIf
\For {($i=1:s-1$)} 
\State A-orthonormalize $W_{j+i} = M^{-1}AW_{j+i-1}$ against $Q$
\State A-orthonormalize $W_{j+i}$,  let  $V = [V, \; W_{j+i}]$ and $Q = [Q, \; W_{j+i}]$ 
\EndFor
\State $\widehat{\alpha} = V^t r_{k-1}$,  \;\;\; $x_k = x_{k-1} + V \widehat{\alpha} $ 
\State $r_k = r_{k-1} - AV \widehat{\alpha} $,  \;\;\;  $\rho = ||r_{k}||_2$,  \;\;\;  $k = k+1$  
\EndWhile
\end{algorithmic}}
\label{alg:psstepSRE-CG2}
\end{algorithm}
\begin{algorithm}[H]
\centering
\caption{Split preconditioned s-step MSDO-CG  }
{\renewcommand{\arraystretch}{1.3}
\begin{algorithmic}[1]
\Statex{\textbf{Input:} $A$,  $n \times n$ symmetric positive definite matrix; $k_{max}$, maximum allowed iterations}
\Statex{\qquad \quad $b$,  $n \times 1$ right-hand side; $x_0$, initial guess; $\epsilon$, stopping tolerance; $M = LL^t$; $s$ }
\Statex{\textbf{Output:} $x_k$, approximate solution of the system $L^{-t}AL^t(L^{-t}x)=L^{-t}b$}
\State$r_0 = b - Ax_0$, $\rho_0 = ||r_0||_2$, $\rho =\rho_0$, $k = 1$; 
\While {( ${\rho} > \epsilon \rho_0 $ and $k < k_{max}$ )}
\State$\widehat{r}_{k-1} = L^{-1}r_{k-1}$ and  $W_1 = L^{-t}[{T}(\widehat{r}_{k-1})]$
\If {($k==1$)}
\State A-orthonormalize $W_1$, let $V = W_1$ and $Q = W_1$ 
\Else 
\State A-orthonormalize $W_1$ against $Q$
\State A-orthonormalize $W_1$,  let $V = W_1$ and $Q = [Q \; W_1]$ 
\EndIf
\For {($i=1:s-1$)} 
\State A-orthonormalize $W_{i+1} = M^{-1}AW_i$ against $Q$
\State A-orthonormalize $W_{i+1}$,  let $V = [V \; W_{i+1}]$ and $Q = [Q \; W_{i+1}]$ 
\EndFor
\State $\widehat{\alpha} = V^t r_{k-1}$,  \;\;\;  $x_k = x_{k-1} + V \widehat{\alpha} $ 
\State $r_k = r_{k-1} - AV\widehat{\alpha} $,  \;\;\; $\rho = ||r_{k}||_2$,  \;\;\;  $k = k+1$  
\EndWhile
\end{algorithmic}}
\label{alg:psstepMSDO-CG}
\end{algorithm}

\subsection{Convergence}\label{sec:precconv}
We test the preconditioned versions using block Jacobi preconditioner. First, the graphs of the matrices are partitioned into 64 domains using Metis Kway dissection \cite{metis}. Each of the 64 diagonal blocks is factorized using Cholesky decomposition (Table \ref{tab:precECG}) or Incomplete Cholesky zero fill-in decomposition (Table \ref{tab:precECG2}). 

Then for a given $t$, each of the $t$ domains is the union of $64/t$ consecutive domains, where the preconditioner $M = LL^t$, the $L_i$'s are lower triangular blocks for $i=1,2,..,64$, and
$$L =  \begin{bmatrix}
L_1 &0&0&0&\hdots&0\\
0&L_2&0&0&\hdots&0\\
0&0&\ddots&0&\hdots&0\\
0&0&0&L_i&\hdots&0\\
0&0&0&0&\ddots&0\\
0&0&0&\hdots&0&L_{64}
\end{bmatrix}. $$

In Table \ref{tab:precECG2}, we test the convergence of Incomplete Cholesky block Jacobi preconditioned s-step and CA versions of SRE-CG, SRE-CG2 and MSDO-CG for $t = 2,4,8,16,32,64$ and $s = 1,2,4,8$. For the matrices {\poisso}, {\nho}, {\skyo}, and {\anio}, and for all the $s$ and $t$ values, the preconditioned s-step versions and their corresponding CA versions with Algorithm \ref{alg:CA-Arnoldi2} converge in the same number of iterations and scale for $s\geq 8$. 
 CA SRE-CG with Algorithm \ref{alg:CA-Arnoldi} stagnates, whereas CA SRE-CG2 with Algorithm \ref{alg:CA-Arnoldi} converges in exactly the same number of iterations as s-step SRE-CG2 and CA SRE-CG2 with Algorithm \ref{alg:CA-Arnoldi2}. Moreover, the corresponding preconditioned SRE-CG and SRE-CG2 versions converge in the similar number of iterations.  
As for {\skyto}, the CA SRE-CG stagnates for $s=8$ and $t=4,8$ only. The CA MSDO-CG with Algorithm \ref{alg:CA-Arnoldi2} converges as fast as s-step MSDO-CG, whereas CA MSDO-CG with Algorithm \ref{alg:CA-Arnoldi} requires more iterations, in some cases ( {\skyo}, {\skyto})

 A similar convergence behavior is observed for the Complete Cholesky block Jacobi preconditioned s-step and CA versions of SRE-CG, SRE-CG2 and MSDO-CG, in Table \ref{tab:precECG}, where the only difference is that the methods converge faster than the corresponding Incomplete Cholesky block Jacobi preconditioned versions.

\begin{table}[H]
\setlength{\tabcolsep}{2pt}
\caption{\label{tab:precECG2} Comparison of the convergence of different Block Jacobi with Incomplete Cholesky preconditioned E-CG versions  (s-step SRE-CG, CA SRE-CG with Algorithm \ref{alg:CA-Arnoldi2}, s-step SRE-CG2, CA SRE-CG2 with Algorithm \ref{alg:CA-Arnoldi} or \ref{alg:CA-Arnoldi2}, s-step MSDO-CG, CA MSDO-CG with Algorithm \ref{alg:CA-Arnoldi} and Algorithm \ref{alg:CA-Arnoldi2}) with respect to number of partitions $t$ and $s$ values.}
  \centering
    \renewcommand{\arraystretch}{1.2}
  \begin{tabular}{||c||c||c||c|c|c|c||c|c|c|||c|c|c|c||c|c|c|||c|c|c|c||c|c|c||c|c|c||}
    \cline{4-27} 
     \cline{4-27}
     \multicolumn{1}{c}{}& \multicolumn{1}{c}{}& \multicolumn{1}{c||}{} & \multicolumn{7}{c|||}{\multirow{1}{*}{\bf SRE-CG }} & \multicolumn{7}{c|||}{\multirow{1}{*}{\bf SRE-CG2}}&  \multicolumn{10}{c||}{\multirow{1}{*}{\bf  MSDO-CG}}\\     \cline{4-27} 
    \multicolumn{1}{c}{}& \multicolumn{1}{c}{}& \multicolumn{1}{c||}{} & \multicolumn{4}{c||}{\multirow{2}{*}{\bf s-step  }} & \multicolumn{3}{c|||}{\multirow{2}{*}{\bf CA Alg7}}& \multicolumn{4}{c||}{\multirow{2}{*}{\bf s-step }}& \multicolumn{3}{c|||}{\multirow{2}{*}{\bf CA Alg5/7}}& \multicolumn{4}{c||}{\multirow{2}{*}{\bf s-step }}& \multicolumn{3}{c||}{\multirow{2}{*}{\bf CA Alg5}}& \multicolumn{3}{c||}{\multirow{2}{*}{\bf CA Alg7}}\\
        \multicolumn{1}{c}{}& \multicolumn{1}{c}{}& \multicolumn{1}{c||}{}& \multicolumn{4}{c||}{\multirow{2}{*}{}} & \multicolumn{3}{c|||}{\multirow{2}{*}{}} & \multicolumn{4}{c||}{\multirow{2}{*}{}} & \multicolumn{3}{c|||}{\multirow{2}{*}{}} & \multicolumn{4}{c||}{\multirow{2}{*}{}} & \multicolumn{3}{c||}{\multirow{2}{*}{}}& \multicolumn{3}{c||}{\multirow{2}{*}{}}\\
    \cline{2-27}
    \multicolumn{1}{c||}{} & \multicolumn{1}{c||}{\bf PCG} & \backslashbox{$\bf t$}{$\bf s$} & \bf 1 &\bf  2  &\bf   4 &\bf  8  &\bf  2  &\bf   4 &\bf  8 &\bf  1 &\bf  2  &\bf  4  &\bf  8 &\bf  2 &\bf  4 &\bf 8  &\bf  1 &\bf  2  &\bf   4 &\bf  8 &\bf  2  &\bf   4 &\bf  8 &\bf  2  &\bf   4 &\bf  8 \\
    \hline \hline        
             \multirow{6}{*}{\rotatebox[origin=c]{90}{\bf \poiss}}
   &\multirow{6}{*}{86} &\bf 2   &82&	41&	21&	11&	41&	21&	11&	82&	41&	21&	11&	41&	21&	11&	79&	40&	21&	11&	40&	20&	11&	40&	21&	12  \\
    \cline{3-27}
   & & \bf 4    &65&	33&	17&	9&	33&	17&	9&	65&	33&	17&	9&	33&	17&	9&	71&	36&	18&	9&	35&	18&	9&	36&	18&	10 \\
     \cline{3-27}
    &&\bf  8    &55&	28&	14&	7&	28&	14&	7&	55&	28&	14&	7&	28&	14&	7&	59&	30&	14&	7&	32&	16&	8&	30&	15&	9   \\    
     \cline{3-27}
    &&\bf 16   &41&	21&	11&	6&	21&	11&	6&	41&	21&	11&	6&	21&	11&	6&	51&	26&	12&	6&	27&	14&	7&	26&	12&	7 \\
      \cline{3-27}
   & &\bf 32   &30&	15&	8&	4&	15&	8&	4&	30&	15&	8&	4&	15&	8&	4&	40&	21&	9&	5&	23&	12&	5&	21&	10&	6 \\
     \cline{3-27}
    &&\bf 64    &23&	12&	6&	3&	12&	6&	3&	23&	12&	6&	3&	12&	6&	3&	30&	16&	7&	3&	19&	9&	4&	16&	8&	5  \\
   \hline
  \hline
         \multirow{6}{*}{\rotatebox[origin=c]{90}{\nh}}
   &\multirow{6}{*}{115} &\bf 2   & 105&	53&	27&	14&	53&	27&	14&	105&	53&	27&	14&	53&	27&	14&	106&	53&	27&	13&	54&	27&	14&	53&	27&	15 \\
    \cline{3-27}
   & &\bf 4 & 82&	41&	21&	11&	41&	21&	11&	82&	41&	21&	11&	41&	21&	11&	87&	45&	22&	11&	46&	22&	12&	45&	22&	13 \\
     \cline{3-27}
    &&\bf 8    &65&	33&	17&	9&	33&	17&	9&	65&	33&	17&	9&	33&	17&	9&	71&	36&	17&	9&	38&	19&	10&	36&	18&	11  \\    
     \cline{3-27}
    &&\bf 16   & 49&	25&	13&	7&	25&	13&	7&	49&	25&	13&	7&	25&	13&	7&	60&	30&	13&	7&	32&	16&	8&	30&	15&	9 \\
      \cline{3-27}
   & &\bf 32   & 36&	18&	9&	5&	18&	9&	5&	36&	18&	9&	5&	18&	9&	5&	45&	24&	11&	5&	27&	13&	6&	24&	12&	7  \\
     \cline{3-27}
     
    &&\bf 64    &27&	14&	7&	4&	14&	7&	4&	27&	14&	7&	4&	14&	7&	4&	34&	18&	8&	4&	22&	10&	5&	18&	9&	6 \\
   \hline
  \hline
       \multirow{6}{*}{\rotatebox[origin=c]{90}{\sky}}
   &\multirow{6}{*}{305} &\bf 2   &237&	119&	60&	30&	119&	60&	31&	233&	112&	56&	28&	112&	56&	28&	233&	143&	75&	34&	160&	81&	41&	143&	75&	35 \\
    \cline{3-27}
   & &\bf 4 &135&	68&	34&	17&	68&	34&	18&	131&	66&	33&	17&	66&	33&	17&	193&	124&	49&	20&	135&	68&	36&	124&	47&	22 \\
     \cline{3-27}
    &&\bf 8    &83&	42&	21&	11&	42&	21&	11&	83&	42&	21&	11&	42&	21&	11&	127&	84&	31&	12&	106&	54&	27&	84&	32&	14 \\    
     \cline{3-27}
    &&\bf 16   &54&	27&	14&	7&	27&	14&	7&	54&	27&	14&	7&	27&	14&	7&	94&	56&	20&	8&	80&	41&	20&	56&	20&	10 \\
      \cline{3-27}
   & &\bf 32   & 39&	20&	10&	5&	20&	10&	5&	39&	20&	10&	5&	20&	10&	5&	62&	37&	13&	6&	58&	28&	12&	37&	15&	7 \\
     \cline{3-27}
     
    &&\bf 64    &29&	15&	8&	4&	15&	8&	4&	29&	15&	8&	4&	15&	8&	4&	43&	25&	9&	4&	41&	21&	8&	25&	11&	6 \\
   \hline
  \hline
       \multirow{6}{*}{\rotatebox[origin=c]{90}{\skyt}}

   &\multirow{6}{*}{245} &\bf 2   &216&	108&	56&	28&	108&	56&	29&	201&	103&	52&	26&	103&	52&	25&	203&	116&50&	26&	117&	69&	30&	116&	60&	34 \\
    \cline{3-27}
   & &\bf 4 &170&	85&	43&	22&	84&	43&	x&	149&	77&	39&	20&	77&	39&	20&	170&	134&	53&	18&	123&	68&	23&	130&	58&	29 \\
     \cline{3-27}
    &&\bf 8    &108&	54&	27&	14&	55&	28&	x&	101&	51&	26&	13&	51&	26&	13&	140&	99&	32&	14&	124&	55&	16&	99&	36&	18 \\    
     \cline{3-27}
    &&\bf 16   &61&	31&	15&	8&	30&	16&	9&	58&	29&	15&	8&	29&	15&	8&	105&	63&	19&	8&	101&	37&	11&	63&	21&	10 \\
      \cline{3-27}
   & &\bf 32   & 34&	17&	9&	5&	18&	9&	5&	34&	17&	9&	5&	17&	9&	5&	77&	38&	11&	5&	71&	23&	7&	38&	13&	7 \\
     \cline{3-27}    
    &&\bf 64    &23&	12&	6&	3&	12&	6&	3&	23&	12&	6&	3&	12&	6&	3&	53&	24&	7&	4&	48&	16&	5&	24&	9&	5    \\
   \hline
  \hline  
       \multirow{6}{*}{\rotatebox[origin=c]{90}{\ani}}
   &\multirow{6}{*}{73} &\bf 2   &70&	35&	18&	9&	35&	18&	9&	70&	35&	18&	9&	35&	18&	9&	70&	35&	18&	9&	35&	18&	9&	35&	18&	9 \\
    \cline{3-27}
   & &\bf 4 &63&	32&	16&	8&	32&	16&	8&	63&	32&	16&	8&	32&	16&	8&	66&	33&	16&	9&	33&	17&	9&	33&	17&	10 \\
     \cline{3-27}
    &&\bf 8    &57&	29&	15&	8&	29&	15&	8&	57&	29&	15&	8&	29&	15&	8&	59&	30&	15&	8&	30&	15&	8&	30&	17&	11\\    
     \cline{3-27}
    &&\bf 16   &50&	25&	13&	7&	25&	13&	7&	50&	25&	13&	7&	25&	13&	7&	54&	27&	14&	7&	28&	14&	7&	27&	16&	10 \\
      \cline{3-27}
   & &\bf 32   &43&	22&	11&	6&	22&	11&	6&	43&	22&	11&	6&	22&	11&	6&	51&	25&	12&	6&	25&	13&	7&	25&	15&	9\\
     \cline{3-27}    
    &&\bf 64    &35&	18&	9&	5&	18&	9&	5&	35&	18&	9&	5&	18&	9&	5&	44&	21&	10&	5&	23&	11&	6&	21&	13&	7\\
   \hline
  \hline
  \end{tabular}
\end{table}  

\begin{table}[H]
\setlength{\tabcolsep}{2pt}
\caption{\label{tab:precECG} Comparison of the convergence of different Block Jacobi with Complete Cholesky preconditioned E-CG versions  (s-step SRE-CG, CA SRE-CG with Algorithm \ref{alg:CA-Arnoldi2}, s-step SRE-CG2, CA SRE-CG2 with \ref{alg:CA-Arnoldi} or \ref{alg:CA-Arnoldi2}, s-step MSDO-CG, CA MSDO-CG with Algorithm \ref{alg:CA-Arnoldi} and Algorithm \ref{alg:CA-Arnoldi2}) with respect to number of partitions $t$ and $s$ values.}
  \centering
    \renewcommand{\arraystretch}{1.2}
  \begin{tabular}{||c||c||c||c|c|c|c||c|c|c|||c|c|c|c||c|c|c|||c|c|c|c||c|c|c||c|c|c||}
    \cline{4-27} 
     \cline{4-27}
     \multicolumn{1}{c}{}& \multicolumn{1}{c}{}& \multicolumn{1}{c||}{} & \multicolumn{7}{c|||}{\multirow{1}{*}{\bf SRE-CG }} & \multicolumn{7}{c|||}{\multirow{1}{*}{\bf SRE-CG2}}&  \multicolumn{10}{c||}{\multirow{1}{*}{\bf  MSDO-CG}}\\     \cline{4-27} 
    \multicolumn{1}{c}{}& \multicolumn{1}{c}{}& \multicolumn{1}{c||}{} & \multicolumn{4}{c||}{\multirow{2}{*}{\bf s-step  }} & \multicolumn{3}{c|||}{\multirow{2}{*}{\bf CA Alg7}}& \multicolumn{4}{c||}{\multirow{2}{*}{\bf s-step }}& \multicolumn{3}{c|||}{\multirow{2}{*}{\bf CA Alg5/7}}& \multicolumn{4}{c||}{\multirow{2}{*}{\bf s-step }}& \multicolumn{3}{c||}{\multirow{2}{*}{\bf CA Alg5}}& \multicolumn{3}{c||}{\multirow{2}{*}{\bf CA Alg7}}\\
        \multicolumn{1}{c}{}& \multicolumn{1}{c}{}& \multicolumn{1}{c||}{}& \multicolumn{4}{c||}{\multirow{2}{*}{}} & \multicolumn{3}{c|||}{\multirow{2}{*}{}} & \multicolumn{4}{c||}{\multirow{2}{*}{}} & \multicolumn{3}{c|||}{\multirow{2}{*}{}} & \multicolumn{4}{c||}{\multirow{2}{*}{}} & \multicolumn{3}{c||}{\multirow{2}{*}{}}& \multicolumn{3}{c||}{\multirow{2}{*}{}}\\
    \cline{2-27}
    \multicolumn{1}{c||}{} & \multicolumn{1}{c||}{\bf PCG} & \backslashbox{$\bf t$}{$\bf s$} & \bf 1 &\bf  2  &\bf   4 &\bf  8  &\bf  2  &\bf   4 &\bf  8 &\bf  1 &\bf  2  &\bf  4  &\bf  8 &\bf  2 &\bf  4 &\bf 8  &\bf  1 &\bf  2  &\bf   4 &\bf  8 &\bf  2  &\bf   4 &\bf  8 &\bf  2  &\bf   4 &\bf  8 \\
    \hline \hline    
             \multirow{6}{*}{\rotatebox[origin=c]{90}{\bf \poiss}}
   &\multirow{6}{*}{67} &\bf 2   &60&	30&	15&	8&	30&	15&	8&	60&	30&	15&	8&	30&	15&	8&	61&	31&	15&	8&	32&	16&	8& 31&	16&	9  \\
    \cline{3-27}
   & & \bf 4    &51&	26&	13&	7&	26&	13&	7&	51&	26&	13&	7&	26&	13&	7&	53&	27&	14&	7& 27&	14&	7&	27&	14&	8 \\
     \cline{3-27}
    &&\bf  8    &42&	21&	11&	6&	21&	11&	6&	42&	21&	11&	6&	21&	11&	6&	45&	23&	11&	6&24&	12&	6&	23&	12&	7   \\    
     \cline{3-27}
    &&\bf 16   &33&	17&	9&	5&	17&	9&	5&	33&	17&	9&	5&	17&	9&	5&	37&	20&	9&	5&21&	10&	5&	20&	10&	6 \\
      \cline{3-27}
   & &\bf 32   &25&	13&	7&	4&	13&	7&	4&	25&	13&	7&	4&	13&	7&	4&	30&	16&	7&	4& 17&	9&	4&	16&	8&	5 \\
     \cline{3-27}
    &&\bf 64    &20&	10&	5&	3&	10&	5&	3&	20&	10&	5&	3&	10&	5&	3&	23&	12&	6&	3&	14&	7&	3& 12&	6&	4      \\
   \hline
  \hline
         \multirow{6}{*}{\rotatebox[origin=c]{90}{\nh}}
   &\multirow{6}{*}{92} &\bf 2   & 74&	37&	19&	10&	37&	19&	10&	74&	37&	19&	10&	37&	19&	10&	81&	40&	19&	10&41&	21&	11&	40&	21&	12 \\
    \cline{3-27}
   & &\bf 4 & 61&	31&	16&	8&	31&	16&	8&	61&	31&	16&	8&	31&	16&	8&	66&	33&	17&	8&	33&	17&	9& 33&	17&	10\\
     \cline{3-27}
    &&\bf 8    &51&	26&	13&	7&	26&	13&	7&	51&	26&	13&	7&	26&	13&	7&	55&	28&	13&	7&29&	14&	7&	28&	14&	9   \\    
     \cline{3-27}
    &&\bf 16   & 39&	20&	10&	5&	20&	10&	5&	39&	20&	10&	5&	20&	10&	5&	44&	23&	11&	5&	24&	12&	6&23&	12&	8 \\
      \cline{3-27}
   & &\bf 32   & 30&	15&	8&	4&	15&	8&	4&	30&	15&	8&	4&	15&	8&	4&	34&	19&	8&	4&20&	10&	5&	19&	9&	6  \\
     \cline{3-27}
     
    &&\bf 64    &23&	12&	6&	3&	12&	6&	3&	23&	12&	6&	3&	12&	6&	3&	26&	14&	6&	3&16&	8&	4&	14&	8&	5 \\
   \hline
  \hline
       \multirow{6}{*}{\rotatebox[origin=c]{90}{\sky}}
   &\multirow{6}{*}{264} &\bf 2   &193&	97&	48&	24&	97&	48&	27&	183&	92&	46&	23&	92&	46&	23&	189&	121&	56&	25&	118&	60&	33& 121&	58&	27 \\
    \cline{3-27}
   & &\bf 4 &105&	53&	27&	14&	53&	27&	14&	105&	53&	27&	14&	53&	27&	14&	146&	91&	37&	16&	106&	54&	27& 91&	38&	17\\
     \cline{3-27}
    &&\bf 8    &66&	33&	17&	9&	33&	17&	9&	66&	33&	17&	9&	33&	17&	9&	98&	64&	23&	10&	80&	39&	20& 64&	23&	12\\    
     \cline{3-27}
    &&\bf 16   &44&	22&	11&	6&	22&	11&	6&	44&	22&	11&	6&	22&	11&	6&	70&	41&	15&	7&58&	28&	13&	41&	17&	8\\
      \cline{3-27}
   & &\bf 32   & 31&	16&	8&	4&	16&	8&	4&	31&	16&	8&	4&	16&	8&	4&	48&	26&	10&	5&37&	18&	7&	26&	11&	6\\
     \cline{3-27}
     
    &&\bf 64    &19&	10&	5&	3&	10&	5&	3&	19&	10&	5&	3&	10&	5&	3&	30&	17&	6&	3&21&	10&	4&	17&	7&	4 \\
   \hline
  \hline
       \multirow{6}{*}{\rotatebox[origin=c]{90}{\skyt}}
   &\multirow{6}{*}{225} &\bf 2   & 181&	91&	48&	24&	94&	48&	26&	173&	87&	46&	23&	87&	46&	23&	186&	106&	48&	25&114&	61&	28&	106&	49&	32\\
    \cline{3-27}
   & &\bf 4 & 139&	70&	37&	19&	72&	38&	x&	130&	65&	34&	17&	65&	34&	18&	154&	113&	43&	19&	112&	60&	22& 113&	47&	24\\
     \cline{3-27}
    &&\bf 8    &80&	40&	20&	10&	40&	20&	14&	77&	39&	20&	10&	39&	20&	10&	117&	76&	26&	11&99&	48&	15&	76&	28&	14 \\    
     \cline{3-27}
    &&\bf 16   &45&	23&	12&	6&	23&	12&	6&	45&	23&	12&	6&	23&	12&	6&	91&	52&	15&	6&87&	32&	10&	52&	17&	8\\
      \cline{3-27}
   & &\bf 32   & 29&	15&	8&	4&	15&	8&	4&	29&	15&	8&	4&	15&	8&	4&	62&	29&	9&	4&57&	20&	6&	29&	10&	6\\
     \cline{3-27}    
    &&\bf 64    &20&	10&	5&	3&	10&	5&	3&	20&	10&	5&	3&	10&	5&	3&	44&	18&	7&	3&38&	13&	4&	18&	7&	4    \\
   \hline
  \hline  
       \multirow{6}{*}{\rotatebox[origin=c]{90}{\ani}}
   &\multirow{6}{*}{69} &\bf 2   & 66&	33&	17&	9&	33&	17&	9&	66&	33&	17&	9&	33&	17&	9&	66&	33&	17&	9& 33&	17&	9&	33&	17&	9 \\
    \cline{3-27}
   & &\bf 4 &61&	31&	16&	8&	31&	16&	8&	61&	31&	16&	8&	31&	16&	8&	61&	31&	15&	8&31&	16&	8&	31&	16&	10\\
     \cline{3-27}
    &&\bf 8    &56&	28&	14&	7&	28&	14&	7&	56&	28&	14&	7&	28&	14&	7&	58&	29&	15&	8&29&	15&	8&	29&	16&	11
\\    
     \cline{3-27}
    &&\bf 16   &49&	25&	13&	7&	25&	13&	7&	49&	25&	13&	7&	25&	13&	7&	54&	27&	13&	7&	28&	14&	7&27&	16&	10 \\
      \cline{3-27}
   & &\bf 32   &42&	21&	11&	6&	21&	11&	6&	42&	21&	11&	6&	21&	11&	6&	50&	24&	12&	6&	25&	13&	6&24&	14&	10\\
     \cline{3-27}    
    &&\bf 64    &35&	18&	9&	5&	18&	9&	5&	35&	18&	9&	5&	18&	9&	5&	44&	20&	10&	5&	21&	11&	5&20&	12&	7\\
   \hline
  \hline
  \end{tabular}
\end{table}

 \section{Parallelization and Expected performance} \label{sec:par}
In this section, we briefly describe the parallelization of the unpreconditioned and preconditioned, s-step and CA SRE-CG, SRE-CG2, and MSDO-CG methods, assuming that the algorithms are executed on a distributed memory machine with $t$ processors. Then, we compare the performance of the s-step and CA methods with respect to the SRE-CG, SRE-CG2, and MSDO-CG methods. Finally, we compare the expected performance of the CA enlarged CG versions with respect to the classical CG, in terms of memory, flops and communication.

In what follows, we assume that the estimated runtime of an algorithm with a total of  $z$  computed flops  and $s$ sent messages, each of size $k$, is $\gamma_c z+ \alpha_c s + \beta_c sk$, where $\gamma_c $ is the inverse floating-point rate (seconds per floating-point operation), $\alpha_c$ is the latency (seconds), and $\beta_c$ is the inverse bandwidth (seconds per word). Moreover, unless specified otherwise, we assume that the number of processors is equal to the number of partitions $t$.

 \subsection{Unpreconditioned Methods} \label{sec:unpm}

The unpreconditioned s-step SRE-CG and s-step SRE-CG2 parallelization is similar to that of SRE-CG and SRE-CG2 described in \cite{EKS}, with the difference that the s-step versions send $(s-1)log(t)$ less messages and words per s-step iteration. Moreover, the s-step MSDO-CG's algorithm is similar to that of s-step SRE-CG in structure. Thus the number of messages sent in parallel is the same as that of s-step SRE-CG2. We assume that SRE-CG, SRE-CG2, and MSDO-CG converge in $k$ iterations and the corresponding s-step versions converge in $k_s = \ceil{\frac{k}{s}}$ iteration. Thus, $ 5s{k_s}log(t) + k_slog(t)\approx 5{k}log(t) + \frac{k}{s}log(t)$ messages are sent in parallel in the s-step versions, compared to $6{k}log(t)$ messages. This leads to a $ \frac{(s- 1)100}{6s} \%$ reduction in communication, without increasing the number of computed flops.  For example, for $s=3$,  $11.11\%$ reduction is achieved in the s-step versions, and $15\%$ reduction for $s=10$.

The difference between the parallelization of unpreconditioned CA MSDO-CG and unpreconditioned CA SRE-CG2 is in the basis construction. In CA MSDO-CG each of the $t$ processors can compute the $s$ basis vectors
$$T_i(r_{k-1}), AT_i(r_{k-1}), A^2T_i(r_{k-1}),..., A^{s-1}T_i(r_{k-1})$$
 independently from other processors, where $T_i(r_{k-1})$ is a vector of all zeros except at $n/t$ entries that correspond to the $i^{th}$ domain of the matrix $A$. Thus, there is no need for communication avoiding kernels. To compute the $s$ vectors without any communication, processor $i$ needs $T_i(r_{k-1})$, the row-wise part of the vector $r_{k-1}$ corresponding to the $i^{th}$ domain $D_i$, and a part of the matrix $A$ depending on $s$ and the sparsity pattern of $A$. Specifically, processor $i$ needs the column-wise part of $A$ corresponding to $R(G(A), D_i ,s)$, the set of vertices in the graph of $A$ reachable
by paths of length at most $s$ from any vertex in $D_i$. 
 
 On the other hand, in CA SRE-CG2 at iteration $k$, the $st$ basis vectors $$AW_{(k-1)s}, A^2W_{(k-1)s},..,A^sW_{(k-1)s}$$  have to be computed, where $W_{(k-1)s}$ is a block of $t$ dense vectors. Similarly to CA MSDO-CG, each of the $t$ processors can compute the $s$ basis vectors
$$AW_{(k-1)s}(:,i), A^2W_{(k-1)s}(:,i),..,A^sW_{(k-1)s}(:,i)$$
 independently from other processors. But processor $i$ needs the full matrix $A$ and the vector $W_{(k-1)s}(:,i)$. Another alternative is to use a block version of the matrix powers kernel, where processor $i$ computes a row-wise part of the $s$ blocks without any communication, by performing some redundant computations. Moreover, as discussed in section \ref{sec:SRNum}, for numerical stability purposes, $AW_{(k-1)s}$ has to be A-orthonormalized before proceeding in the basis construction. This increases the number of messages sent. 
 
  Then, all of the computed $st$ vectors in  CA MSDO-CG and  CA SRE-CG2,  are A-orthonormalized against the previous $st(k-1)$ vectors using CGS2 (Algorithm 18 in \cite{sophiethesis}), and against themselves using A-CholQR (Algorithm
21 in \cite{sophiethesis}) or Pre-CholQR (Algorithm 23 in \cite{sophiethesis}). The parallelization of the A-orthonormalization algorithms is described in details for a block of $t$ vectors in \cite{sophiethesis}. For a block of $st$ vectors, the same number of messages is sent but with more words. Thus, $ 5{k_s}log(t) + k_slog(t)\approx  6\frac{k}{s}log(t)$ messages are sent in parallel in CA MSDO-CG, leading to a $\frac{(s-1)100}{s} \%$ reduction in communication  as compared to MSDO-CG (for $s=3 \implies 66.6\%$ reduction). Whereas, is CA SRE-CG $ 2*5{k_s}log(t) + k_slog(t)\approx  11\frac{k}{s}log(t)$ messages are sent, leading to a $\frac{(6s-11)100}{6s} \%$ reduction in communication (for $s=3 \implies 17.4\%$ reduction).  

As for the unpreconditioned CA SRE-CG, its parallelization is exactly the same as that of CA SRE-CG2, where the same number of messages is sent in parallel. However, less words are sent per message, since in CA SRE-CG the $st$ computed vectors are A-orthonormalized against the previous $(s+1)t$ vectors.

Thus the s-step and CA versions of the enlarged CG methods reduce communication as compared to their corresponding enlarged versions for the same number of processors and the same number of partitions $t$. All the s-step versions are comparable in terms of numerical stability and communication reduction. However, CA MSDO-CG is a better choice since it reduces communication the most. But the question that poses itself is: ``Is it better, in terms of communication, to double $t$ or merge two iterations of MSDO-CG?". 

The number of flops performed per iteration in the s-step and CA MSDO-CG for $s = 2^i$ and a given $t$, is comparable to that of the  MSDO-CG algorithms where we have $2^i t$ partitions. This is due the fact that in both versions, we are constructing and A-orthonormalizing $2^i t$ basis vectors per iteration, for $\overline{\mathscr{K}}_{k,t,2^i}$ (s-step and CA MSDO-CG versions) and $\overline{\mathscr{K}}_{k,2^it,1}$ (MSDO-CG). On the other hand, based on the observed results in sections \ref{sec:SRNum} and \ref{sec:precconv}, by doubling $t$ in any of the enlarged CG methods, the number of iterations needed for convergence is not halved, but on average it is $25\%$ less. Whereas, in s-step MSDO-CG and CA MSDO-CG by doubling $s$ the number of iterations is halved (up to some value $s$).

The number of processors in the MSDO-CG, s-step MSDO-CG, and CA MSDO-CG could be equal, a multiple or a divisor of the number of partitions $t$. In the first case, we assume that the number of processors is equal to $t$. Let $k$ be the number of iterations needed for convergence of MSDO-CG, where $t$ basis vectors are computed per iteration. Then, the number of messages sent in parallel in MSDO-CG where we have $2^i t$ partitions, is $6(0.75)^i k log(t)$. 
Whereas, for $s=2^i$, $[5 + (0.5)^i]klog(t)$ messages are sent in parallel in s-step MSDO-CG, and $6(0.5)^i k log(t)$ messages are sent in CA MSDO-CG. But, for $i \geq 1$, we have that
$$6(0.5)^i k log(t) < 6(0.75)^i k log(t) < [5 + (0.5)^i]klog(t). $$
 Thus, in this case it is clear that doubling $s$ and using the CA version is better than doubling the number of partitions in MSDO-CG, which is better than using the s-step version.

In the second case, we assume that the number of processors is equal to the number of partitions. Then, the number of messages sent in parallel in MSDO-CG where we have $2^i t$ processors and partitions, is $6(0.75)^i k log(2^i t)$. However, in s-step MSDO-CG, and CA MSDO-CG we assume that we have $t$ processors and $s=2^i$. 
In this case, s-step MSDO-CG sends less messages than MSDO-CG, if and only if,
\begin{eqnarray}
6(0.75)^i(i+log(t)) &>& [5 + (0.5)^i]log(t) \nonumber\\
\iff 6i(0.75)^i &>& [5-6(0.75)^i + (0.5)^i] log(t).  \label{2i}
\end{eqnarray}
The inequality \eqref{2i} is valid for $i=1$ and $t=2,4,8,16$. This means that for $s=2$ s-step MSDO-CG requires less communication with $t=2,4,8,16$ processors/partitions than MSDO-CG with $2t$ processors/partitions. Hence, assuming that communication is much more expensive than flops, it is better to merge 2 iterations of MSDO-CG and compute a basis for $\overline{\mathscr{K}}_{k,t,2}$, than double the $t$ value and compute  a basis for $\overline{\mathscr{K}}_{k,2t,1}$. Moreover, \eqref{2i} is valid for $i=2$ ($s=4$) and $t=2,4,8$, and for $i=3,4$ ($s=8,16$) and $t=2,4$. 
On the other hand, CA MSDO-CG sends less messages than MSDO-CG for all values of $i \geq 1$ and $t \geq 2$, since 
\begin{eqnarray}
6(0.75)^i(i+log(t)) &>& 6(0.5)^i log(t) \nonumber\\
\iff i(0.75)^i &>& ((0.5)^i -(0.75)^i) log(t).  \label{2i2}
\end{eqnarray}

 \subsection{Preconditioned Methods} \label{sec:pm}
The only difference between the preconditioned and unpreconditioned algorithms is in the matrix - block of vectors multiplication where the preconditioner is applied. Thus, if the preconditioned matrix - block of vectors multiplication can be performed without communication, then the same number of messages will be sent per iteration.
In the s-step versions, the preconditioner is applied twice, $L^{-t}[{T}(\widehat{r}_{k-1})]$ and $W_{i+1} = M^{-1}AW_i$, where $\widehat{r}_{k-1} = L^{-1}r_{k-1}$, $M = LL^t$ and $W_i$ is a dense $n\times t$ matrix. The parallelization of these ``multiplications" depends on the type of the preconditioner and the sparsity pattern of $A$. 

For example, let $L$ be a block diagonal lower triangular matrix with $t$ blocks, $L_i$, for $i=1,..,t$. Then, ${T}_i(\widehat{r}_{k-1}) = {T}_i(L^{-1}{r}_{k-1}) = L^{-1}{T}_i({r}_{k-1})$ is an all zero vector except the entries corresponding to domain $D_i$, which are obtained by solving $z_i = L_i^{-1}{r}_{k-1}(D_i)$ using forward substitution. Thus, processor $i$ needs the $i^{th}$ diagonal block $L_i$ and $T_i(r_{k-1})$ to compute ${T}_i(\widehat{r}_{k-1})$.
Similarly, computing $L^{-t}{T}_i(\widehat{r}_{k-1})$ is reduced to computing $L_i^{-t}z_i$ using backward substitution. Thus, processor $i$ computes the vector $L^{-t}[{T}_i(\widehat{r}_{k-1})]$ without any communication.  As for $W_{j+1} = M^{-1}AW_j$, processor $i$ computes the row-wise part of $W_{j+1}$ corresponding to domain $D_i$, using the row-wise part of $A$, $L_i$, $L_i^t$, and the row-wise part of $W_j$ corresponding to $\delta_i = R(G(A),D_i,1)$. Thus, processor $i$ computes $Z_i = A(D_i,\delta_i)W_j(\delta_i,:)$ without any communication, and solves for $W_{j+1}(D_i,:) = (L_iL_i^t)^{-1}Z_i$ using a backward and forward  substitution.

In preconditioned CA MSDO-CG,  at the $k^{th}$ iteration the $st$ vectors 
$$L^{-t}T(\hat{r}_{k-1}), M^{-1}AL^{-t}T(\hat{r}_{k-1}), (M^{-1}A)^2L^{-t}T(\hat{r}_{k-1}),..., (M^{-1}A)^{s-1}L^{-t}T(\hat{r}_{k-1})$$
are computed. 
Assuming that $L$ is a block diagonal lower triangular matrix, then each of the $t$ processors can compute the $s$ basis vectors
$$L^{-t}T_i(\hat{r}_{k-1}), M^{-1}AL^{-t}T_i(\hat{r}_{k-1}), (M^{-1}A)^2L^{-t}T_i(\hat{r}_{k-1}),..., (M^{-1}A)^{s-1}L^{-t}T_i(\hat{r}_{k-1})$$
 independently from other processors, but using a relatively big column-wise part of $A$ and $M$, depending on $s$ and the sparsity patterns of $A$ and $L$.  
 Another alternative is that each of the $t$ processors computes the row-wise part of the $st$ vectors corresponding to $D_i$ without communication using a preconditioned block version of the matrix powers kernel. However, the same ``relatively big" column-wise part column-wise part of $A$ and $M$ is needed.  
 
 To reduce the memory storage needed per processor, one option is to overlap communication with computation in the preconditioner's application. Let $W_1 = L^{-t}T(\hat{r}_{k-1})$, and $W_{j+1} = M^{-1}A W_{j}$ for $j\geq 1$. Each processor $i$ can compute $W_1(D_i,:)$ independently, since $W_1(D_i,:)$ is all zeros except the $i^{th}$ column which is equivalent to solving for $L_i^{-t}T_i(\hat{r}_{k-1})$. To compute $W_{j+1}(D_i,:) = L_i^{-t}L_i^{-1} Z_i$, where $Z_i = A(D_i,\delta_i)W_j(\delta_i,:)$, processor $i$ needs part of $W_j(\delta_i,:)$ from neighboring processors. This local communication occurs once $W_j$ is computed, and it is overlapped with the computation of $A(D_i,D_i)W_j(D_i,:)$. Then the remaining part of the multiplication is performed once the messages are received from neighboring processors. In this case, processor $i$ only needs $A(D_i,\delta_i)$, and $L_i$. And there are $s-1$ communication phases, once before the last $s-1$ preconditioned matrix multiplications. Even though these local communications are hidden with computations, but they might require some additional time. However, the gain in communication reduction from replacing $s$ A-orthonormalization procedures by just one, overweighs this ``possible" additional communication, as the A-orthonormalization requires global communication.
 
In preconditioned CA SRE-CG and CA SRE-CG2, at the first iteration $$L^{-t}T(\hat{r}_{0}), M^{-1}AL^{-t}T(\hat{r}_{0}), (M^{-1}A)^2L^{-t}T(\hat{r}_{0}),..., (M^{-1}A)^{s-1}L^{-t}T(\hat{r}_{0})$$ are computed, but at the $k^{th}$ iteration the $st$ vectors $$M^{-1}AW, (M^{-1}A)^2W,..., (M^{-1}A)^{s}W$$ are computed, where $W = W_{(k-1)s}$ is a dense $n \times t$ matrix. The communication pattern and parallelization of the preconditioned matrix multiplication is the same as that of CA MSDO-CG, with the exception that for $k>1$ an additional local communication is required for $M^{-1}AW$.  Moreover, the communication reduction in the preconditioned CA SRE-CG2 is comparable to that of CA MSDO-CG, since once preconditioned, SRE-CG2 with Algorithm \ref{alg:CA-Arnoldi} converges and scales even for $s=8$, as discussed in section \ref{sec:precconv}.

\subsection{Expected Performance}\label{expperf}
By merging $s$ iterations of the enlarged CG versions, communication is reduced in the corresponding s-step and CA versions as discussed in sections \ref{sec:unpm} and \ref{sec:pm}. Moreover, the enlarged CG versions converge faster in terms of iterations than CG by enlarging the Krylov Subspace. However, are these reductions enough to obtain a method that converges faster than CG in terms of runtime, using comparable resources?

Conjugate Gradient is known for its short recurrence formulae and the limited memory storage.
In preconditioned CG (Algorithm \ref{alg:pCG}), if  processor $i$ computes part of the vectors $p_k(D_i), w(D_i), x_k(D_i), r_k(D_i), \widehat{r}_k(D_i)$, then it needs $A(D_i,:)$, $L_i$ and $b(D_i)$, assuming that $M = LL^t$ is a block diagonal matrix. Moreover, two global communications are needed per iteration to perform the dot products  $p^tw, {\rho}_k, \widehat{\rho}_k$, and local communication with neighboring processors is needed to compute $w(D_i) = A(D_i,\delta_i)p(\delta_i)$, where $\delta_i = R(G(A),D_i,1)$. Given that there is a total of $m$ processors, $2log(m)$ messages are sent per CG iteration without considering local communication.
  
 \begin{algorithm}[h!]
\centering
\caption{Preconditioned CG}
{\renewcommand{\arraystretch}{1.3}
\begin{algorithmic}[1]
\Statex{\textbf{Input:} $A$, $M = LL^t$, $b$, $x_0$, $\epsilon$, $k_{max}$ }
\Statex{\textbf{Output:} $x_k$, the approximate solution of the system $L^{-t}AL^t(L^{-t}x)=L^{-t}b$}
\State$r_0 = b - Ax_0$, $\rho_0 = ||r_0||^2_2$, $\widehat{r}_0 = M^{-1}r_{0}$, $\widehat{\rho}_0 = r_0^t \widehat{r}_0$, $k = 1$; 
\While {( $\sqrt{\rho_{k-1}} > \epsilon \sqrt{\rho_{0}}$ and $k \leq k_{max}$ )}
\If {($k==1$)} $p =  \widehat{r}_0$
\Else  $\;\;\; \beta = \frac{\widehat{\rho}_{k-1}}{\widehat{\rho}_{k-2}}$,  $p =  \widehat{r}_k + \beta p$
\EndIf 
\State $w = Ap$,  \;\; $\alpha =  \dfrac{\widehat{\rho}_{k-1}}{p^tw}$
\State $x_k = x_{k-1} + \alpha p $, \;\; $r_k = r_{k-1} - \alpha w $, \;\; $\widehat{r}_k = M^{-1}r_{k}$
\State $\rho_k = ||r_{k}||^2_2$, \;\; $\widehat{\rho}_k = r_k^t \widehat{r}_k$, \;\; $k = k+1$  
\EndWhile
\end{algorithmic}}
\label{alg:pCG}
\end{algorithm}

Similarly to CG, the SRE-CG, SRE-CG2, and MSDO-CG versions have short recurrence formulae. 
However, in terms of memory, the SRE-CG versions are the best choice since a limited number of vectors, depending only on $t$ and $s$, need to be stored. In SRE-CG, s-step SRE-CG, and CA SRE-CG, $(3t)$ vectors, $(s+2)t$ vectors, and $(2s+1)t$ vectors are stored respectively. Whereas, in SRE-CG2 and MSDO-CG version, $stk$ vectors need to be stored, where $k$ is the number of iterations needed for convergence which is not known a priori.

Given that $s = 2^i$, the number of partitions is $t = 2^j$, and a total of $m$ processors run the algorithms where $m$ is a multiple, divisor or equal to $t = 2^j$ for $j,i \geq 1$;  then, $2klog(m)$ messages are sent in CG with a total of $k$ iterations till convergence.  As for SRE-CG, a total of $6(0.75)^{j}klog(m)$ messages are sent, assuming that as $t$ is doubled the number of iterations is reduced by $25\%$ on average. Whereas in s-step and CA SRE-CG, a total of $[5+(0.5)^i](0.75)^{j}klog(m)$ and $11(0.5)^i(0.75)^{j}klog(m)$ messages are sent respectively.

As compared to CG, SRE-CG, s-step SRE-CG, and CA SRE-CG communicate less in total when 
\begin{eqnarray}
2klog(m) &>& 6(0.75)^{j}klog(m) \label{sre}\\
2klog(m) &>& [5+(0.5)^i](0.75)^{j}klog(m)\label{sstepsre}\\
2klog(m) &>& 11(0.5)^i(0.75)^{j}klog(m) \label{casre}
\end{eqnarray}
respectively. For $j\geq 4$, inequality \eqref{sre} is satisfied, i.e. SRE-CG reduces communication with respect to CG for number of partitions $t \geq 16$. Similarly, s-step SRE-CG further reduces communication with respect to CG for $s=2,4,8$ and $t \geq 16$. Whereas CA SRE-CG reduces communication for $s=2$ and $j\geq 4$ ($t \geq 16$), $s = 4$ and $j\geq 2$ ( $t \geq 4$), and $s = 8$ and $j\geq 1$ ( $t \geq 2$).

Hence, for $s = 2^i$ and $t=2^j$ in the  SRE-CG, s-step SRE-CG, and CA SRE-CG, the reduction in communication with respect to CG is respectively ($j = 5$ and $i = 2$)
\begin{eqnarray} 
100-3(0.75)^j100\% &\qquad = \qquad&  (28.8\%), \nonumber \\
100-(2.5+0.5^{i+1})0.75^j100\%  &\qquad = \qquad& (37.7\%),\nonumber \\
100-5.5(0.5)^i(0.75)^{j}100\% & \qquad = \qquad & (67.37 \%). \nonumber 
\end{eqnarray} 

Thus, it is expected that SRE-CG, s-step SRE-CG, and CA SRE-CG will converge faster than CG in parallel considering the $s$ and $t$ values discussed above and that communication is much more expensive than flops. Moreover, even the s-step and CA versions of the SRE-CG2 and MSDO-CG are expected to converge faster than CG, but require much more memory storage per processor.   
 \section{Conclusion} \label{sec:conc}
In this paper, we introduced the s-step and communication avoiding versions of  SRE-CG, and  SRE-CG2, which are based on the enlarged Krylov subspace. We have also introduced a modified MSDO-CG version that is equivalent to MSDO-CG theoretically and numerically, but based on a modified enlarged Krylov subspace which allows the s-step and CA formulations. The split preconditioned s-step and CA versions are also presented in section \ref{sec:precCG}. 

The s-step and communication avoiding versions merge $s$ iterations of the enlarged CG methods into one iteration where denser operations are performed for less communication. Numerical stability of the s-step and CA version is tested in section \ref{sec:SRNum}, where as $s$ is doubled, the number of iterations needed for convergence in the s-step methods is roughly divided by two, even for $s\geq 10$. As for the CA methods, once the system is preconditioned, a similar scaling behavior is observed in section \ref{sec:precconv}. Accordingly, it is shown in sections \ref{sec:unpm} and \ref{sec:pm} that the s-step and CA versions reduce communication with respect to the corresponding enlarged methods for $s \geq 2$. 

Although the number of messages per iteration of the enlarged CG methods and their s-step and CA versions, is more than that of CG, however due to the reduction in the number of iterations in the enlarged versions, the total messages sent is less as discussed in section \ref{expperf}. This implies that all the enlarged CG variants should require less time to converge than CG. However, the SRE-CG variants are the most feasible candidates due to their limited memory storage requirements.

Future work will focus on implementing, testing, and comparing the runtime of the introduced enlarged CG versions on CPU's and GPU's with respect to existing similar methods.

\end{document}